\newcommand{\Top}{\mathbf{Top}}
\newcommand{\Set}{\mathbf{Set}}
\newcommand{\Ord}{\mathbf{Ord}}
\newcommand{\Lat}{\mathbf{Lat}}
\newcommand{\Grph}{\mathbf{Grph}}
\newcommand{\Grp}{\mathbf{Grp}}
\newcommand{\Rng}{\mathbf{Rng}}
\newcommand{\V}{\ensuremath{\mathbb{V}}}
\newcommand{\C}{\ensuremath{\mathbb{C}}}
\newcommand{\N}{\ensuremath{\mathbb{N}}}
\newcommand{\Q}{\ensuremath{\mathbb{Q}}}
\newcommand{\op}{\mathrm{op}}
\newcommand{\Sub}{\mathrm{Sub}}
\newcommand{\eot}[1]{\ensuremath{\in_{#1}}}
\newcommand{\Eq}[1]{\ensuremath{\mathrm{Eq}(#1)}}
\newcommand{\ord}[1]{\ensuremath{\{1,2,\dots,#1\}}}
\title{Characterizations of majority categories}
\author{Michael Hoefnagel}
\newtheorem{theorem}{Theorem}
\newtheorem*{theorem*}{Theorem}
\newtheorem{definition}{Definition}
\newtheorem{proposition}{Proposition}
\newtheorem{lemma}{Lemma}
\newtheorem{corollary}{Corollary}
\newtheorem{example}{Example}
\newtheorem{remark}{Remark}
\begin{document}
\maketitle

\begin{abstract}
In universal algebra, it is well known that varieties admitting a majority term admit several Mal'tsev-type characterizations. The main aim of this paper is to establish categorical counterparts of some of these characterizations for regular categories. We prove a categorical version of Bergman's Double-projection Theorem: a regular category is a majority category if and only if every subobject $S$ of a finite product $A_1 \times A_2 \times \cdots \times A_n$ is uniquely determined by its two-fold projections. We also establish a categorical counterpart of the Pairwise Chinese Remainder Theorem for algebras, and characterize regular majority categories by the classical congruence equation $\alpha \cap (\beta \circ \gamma) = (\alpha \cap \beta) \circ (\alpha \cap \gamma)$ due to A.F.~Pixley.  
\end{abstract}

\section{Introduction}\label{sec-Introduction}
The variety $\Lat$ of lattices is one which admits a \emph{majority} term, i.e., a ternary term $m(x,y,z)$ satisfying the equations $m(x,x,y) = m(x,y,x) = m(y,x,x) = x$. This property of $\Lat$ sharply distinguishes it from other familiar varieties, such as the varieties  $\Grp$ of groups, $\Rng$ of rings, and $R-\textbf{Mod}$ of modules over a ring $R$. For lattices, the \emph{median operation}:
\[
m(x,y,z) = (x\wedge y) \vee (x \wedge z) \vee (y \wedge z),
\]
as well as its lattice-theoretic dual, are both majority terms \cite{BK47}. Several theorems that hold for the variety of lattices extend to characterizations of those varieties which admit a majority term. Among such theorems is \textit{Bergman's Double-projection Theorem}, which asserts that a sublattice $S$  of a finite product of lattices $L_1 \times L_2 \times \cdots \times L_n$ is uniquely determined by its two-fold projections in $L_i \times L_j$ for $i,j = 1,2,..., n$  \cite{BerDPT}.  Also, there is the \textit{Pairwise Chinese Remainder Theorem} for lattices due to R. Willie \cite{Wil70}: if $\theta_1,\theta_2,....,\theta_n$ are congruences on a lattice $L$, and $a_1,...,a_n \in L$ any elements,  then if the system of congruences 
\[
x \equiv a_i \mod \theta_i  \tag{for $i = 1,2,...,n$}
\] 
is solvable two at a time, then it is solvable. 

A.P. Huhn showed that the theorems mentioned above extend to characterizations of those varieties which admit a majority term (for a proof see \cite{BP75}): 
\begin{theorem} \label{thm-UA-characterization}
	The following are equivalent for a variety $\V$ of algebras.
	\begin{enumerate}[(i)]
		\item $\V$ admits a majority term. 
		\item Any subalgebra $S$ of a finite product $A_1\times A_2\times \cdots \times A_n$ of algebras is uniquely determined by its two-fold projections in  $A_i \times A_j$
		\item For any congruences $\theta_1,\theta_2,....,\theta_n$  on any algebra $A$ and any elements $a_1,...,a_n \in A$, if the system of congruences 
		\[
		x \equiv a_i \mod \theta_i \tag{for $i = 1,2,...,n$}
		\] 
		is solvable two at a time, then it is solvable.
	\end{enumerate}
\end{theorem}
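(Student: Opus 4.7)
The plan is to prove (i)$\Rightarrow$(ii), (i)$\Rightarrow$(iii), (ii)$\Rightarrow$(i), and (iii)$\Rightarrow$(i), closing the loop. The two converses to (i) will be handled uniformly by extracting a majority term from a suitable free algebra of $\V$.

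For (i)$\Rightarrow$(ii), fix a majority term $m$, and let $S, T \subseteq A_1 \times \cdots \times A_n$ be subalgebras with $\pi_{ij}(S) = \pi_{ij}(T)$ for all $i < j$. Induct on $n$, the cases $n \leq 2$ being trivial. For $n \geq 3$ and any $s = (s_1, \ldots, s_n) \in S$, the inductive hypothesis applied to the projections of $S$ and $T$ onto each $(n-1)$-element block of coordinates produces, for each $i \in \{1,2,3\}$, an element $t^i \in T$ agreeing with $s$ on every coordinate except possibly the $i$-th. Then $m(t^1, t^2, t^3)$, computed coordinatewise, equals $s$: in each of coordinates $1,2,3$ exactly two of the three arguments equal the corresponding $s_i$, and in every coordinate $k \geq 4$ all three equal $s_k$. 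Hence $s \in T$, and symmetry gives $S = T$.

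The same ternary trick yields (i)$\Rightarrow$(iii). For $n = 3$ and pairwise witnesses $c_{ij}$, the element $m(c_{12}, c_{13}, c_{23})$ simultaneously satisfies $x \equiv a_i \pmod{\theta_i}$ for $i = 1, 2, 3$. For $n \geq 4$, induct: the inductive hypothesis applied to the three $(n-1)$-element subsystems obtained by omitting $\theta_1$, $\theta_2$, $\theta_3$ individually yields solutions $b_1, b_2, b_3$, and $m(b_1, b_2, b_3)$ solves the full system (two of the three $b_k$ lie in the right class modulo each of the first three congruences, and all three lie in the right class modulo $\theta_i$ for $i \geq 4$).

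For (ii)$\Rightarrow$(i), let $F = F_\V(x, y)$ be the free $\V$-algebra on two generators and let $S \subseteq F^3$ be the subalgebra generated by $(x,x,y)$, $(x,y,x)$, and $(y,x,x)$. Each of the three two-fold projections of $S$ contains both $(x,y)$ and $(y,x)$, hence equals $F \times F$; these agree with the two-fold projections of $F^3$, so by (ii) $S = F^3$. In particular $(x,x,x) \in S$ is realized as $m\bigl((x,x,y), (x,y,x), (y,x,x)\bigr) = (x,x,x)$ for some ternary term $m$, and reading the equation coordinatewise gives the majority identities $m(x,x,y) = m(x,y,x) = m(y,x,x) = x$.

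For (iii)$\Rightarrow$(i), consider $F = F_\V(x_1, x_2, x_3)$ with the three congruences $\alpha_{ij}$ generated by identifying $x_i$ with $x_j$. A majority term is precisely an element $m \in F$ satisfying the system $m \equiv x_1 \pmod{\alpha_{12}}$, $m \equiv x_1 \pmod{\alpha_{13}}$, $m \equiv x_2 \pmod{\alpha_{23}}$; pairwise, these are solved by $x_1$, $x_2$, and $x_3$ (for the pairs $\{\alpha_{12}, \alpha_{13}\}$, $\{\alpha_{12}, \alpha_{23}\}$, $\{\alpha_{13}, \alpha_{23}\}$ respectively). Condition (iii) then delivers the required $m$.

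The main obstacle is the bookkeeping for (i)$\Rightarrow$(ii): one must verify that the equality of two-fold projections is inherited by each $(n-1)$-fold projection so that the inductive hypothesis applies, and then see that three carefully chosen witnesses $t^1, t^2, t^3$ suffice regardless of $n \geq 3$. The other implications each reduce to a short direct computation once the correct generators or congruences in the free algebra are chosen.
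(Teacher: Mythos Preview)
The paper does not itself prove this theorem; it is quoted as a classical result (attributed to Huhn, with a reference to Baker--Pixley) and then generalized categorically. Your arguments for (i)$\Rightarrow$(ii), (i)$\Rightarrow$(iii), and (iii)$\Rightarrow$(i) are correct and are the standard ones.

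There is, however, a genuine error in your proof of (ii)$\Rightarrow$(i). You assert that each two-fold projection of $S$ ``contains both $(x,y)$ and $(y,x)$, hence equals $F\times F$''. This inference is false: the subalgebra of $F\times F$ generated by $(x,x),(x,y),(y,x)$ is in general proper, even in varieties that satisfy (ii). For instance in lattices, if $(y,y)$ were in this sublattice there would be a lattice polynomial $p$ with $p(x,x,y)=y$ and $p(x,y,x)=y$; evaluating in the two-element lattice at $(x,y)=(0,1)$ and $(x,y)=(1,0)$ gives $p(0,0,1)=1$ and $p(1,0,1)=0$, contradicting monotonicity since $(0,0,1)\leqslant(1,0,1)$. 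So you cannot conclude $S=F^{3}$.

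The repair is immediate and keeps your setup intact. Let $T\subseteq F^{3}$ be the subalgebra generated by $(x,x,y),(x,y,x),(y,x,x)$ \emph{together with} $(x,x,x)$. The extra generator projects to $(x,x)$, which already lies in each $\pi_{ij}(S)$, so $\pi_{ij}(S)=\pi_{ij}(T)$ for all $i,j$. Now (ii) gives $S=T$, hence $(x,x,x)\in S$, and your extraction of a ternary term $m$ with $m(x,x,y)=m(x,y,x)=m(y,x,x)=x$ goes through unchanged. This is also, in essence, how the paper handles the corresponding categorical implication in Theorem~\ref{thm-relationDecompositionImpiesMajority}: one compares a given relation $R$ not with the full product but with the subobject $P$ determined by $R$'s own two-fold projections, which is automatically majority-selecting.
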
 
These results complement the first Mal'tsev-type characterization of varieties admitting a majority term due to A.F. Pixley. 
\begin{theorem}[\cite{Pix63}] \label{thm-UA-Pixley-characterization}
	A variety $\V$ admits a majority term if and only for any algebra $X$ in $\V$, and any three congruences $\alpha,\beta, \gamma$ on $X$, we have
	\[
	\alpha \cap (\beta \circ \gamma) = (\alpha \cap \beta) \circ (\alpha \cap \gamma).
	\]
\end{theorem}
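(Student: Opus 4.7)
My plan is to prove the two implications separately. The inclusion $(\alpha \cap \beta) \circ (\alpha \cap \gamma) \subseteq \alpha \cap (\beta \circ \gamma)$ requires no majority term and follows from the transitivity of $\alpha$ together with the definition of composition, so the work concentrates on the reverse inclusion and on the converse.

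For $(\Rightarrow)$, I would fix $(x,y) \in \alpha \cap (\beta \circ \gamma)$, pick $z$ with $x \mathrel{\beta} z$ and $z \mathrel{\gamma} y$, and propose $w = m(x, z, y)$ as a witness that $(x, y) \in (\alpha \cap \beta) \circ (\alpha \cap \gamma)$, where $m$ is the majority term. The four required memberships $(x, w) \in \alpha$, $(x, w) \in \beta$, $(w, y) \in \alpha$ and $(w, y) \in \gamma$ each reduce to one use of a majority identity to rewrite $x$ or $y$ as an image of $m$, together with the compatibility of $\alpha, \beta, \gamma$ with the operations of $\V$. For instance, $x = m(x, z, x)$ combined with $(x, y) \in \alpha$ yields $(x, w) \in \alpha$, while $x = m(x, x, y)$ combined with $(x, z) \in \beta$ yields $(x, w) \in \beta$; the symmetric choices $y = m(y, z, y)$ and $y = m(x, y, y)$ dispose of the two remaining memberships.

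For $(\Leftarrow)$, I would work inside the free algebra $F = F_\V(a, b, c)$ on three generators and apply the hypothesised equation to the principal congruences $\theta_{ab}$, $\theta_{ac}$, $\theta_{bc}$ on $F$ generated by the indicated pairs. The element $c$ witnesses $(a, b) \in \theta_{ac} \circ \theta_{bc}$, and $(a, b) \in \theta_{ab}$ holds trivially, so the equation produces some $t \in F$ with $(a, t) \in \theta_{ab} \cap \theta_{ac}$ and $(t, b) \in \theta_{ab} \cap \theta_{bc}$. Interpreting $t$ as the value at $(a, b, c)$ of a ternary term $m$ and reading each membership inside the corresponding quotient $F/\theta_{ab}$, $F/\theta_{ac}$, $F/\theta_{bc}$ --- each a free $\V$-algebra on two generators --- one obtains the three equations $m(x, x, y) = x$, $m(x, y, x) = x$, $m(x, y, y) = y$ as identities valid throughout $\V$; the last is the majority identity $m(y, x, x) = x$ after relabelling, so $m$ is a majority term.

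The main obstacle is the forward direction, specifically the choice of witness $w = m(x, z, y)$: this single element must lie simultaneously in the $\alpha$-class shared by $x$ and $y$, in the $\beta$-class of $x$, and in the $\gamma$-class of $y$, and it is precisely the three majority identities that allow $m(x, z, y)$ to play all four roles at once. The backward direction is, by comparison, essentially mechanical once one sees that $c$ relays $a$ to $b$ through $\theta_{ac} \circ \theta_{bc}$; Pixley's equation then extracts the required ternary term automatically from the resulting intermediate element.
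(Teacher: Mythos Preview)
Your argument is correct in both directions; the forward witness $w = m(x,z,y)$ and the backward extraction of $m$ from the free algebra on three generators via the principal congruences $\theta_{ab},\theta_{ac},\theta_{bc}$ are exactly the standard universal-algebraic manoeuvres, and each of the four membership checks and three identity deductions goes through as you describe.

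The paper, however, does not prove this theorem: it is stated with a citation to Pixley's original paper and used only as motivation. What the paper \emph{does} prove is the categorical generalisation (Theorem~\ref{thm-pixley-generalization}), and that proof proceeds along quite different lines. For the direction corresponding to your $(\Rightarrow)$, Lemma~\ref{lem-majority-implies-reflexive-relation-characterization} builds an auxiliary quaternary relation $R$, projects it to a ternary relation $R'$, and applies the majority-selecting property to $R'$; there is no term $m$ available, so the role of your witness $m(x,z,y)$ is played by an element produced via a regular-epi cover after the majority-selection step. For the direction corresponding to your $(\Leftarrow)$, Lemma~\ref{lem-effective-characterization-implies-majority} starts from an arbitrary ternary relation $R$, forms the kernel congruences $\alpha,\beta,\gamma$ of its three projections, and deduces the majority-selecting property from the congruence identity; there is no free-algebra argument because the categorical setting has no term operations to extract. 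Your proof is thus more elementary and more explicit for the varietal statement itself, while the paper's route is what is needed to push the result beyond varieties to arbitrary regular categories.
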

The main aim of this paper is to establish categorical counterparts of the two theorems above. Using the general techniques of \cite{ZJan04}, it is possible to reformulate statement (i) of Theorem~\ref{thm-UA-characterization} for general categories. The resulting notion, that of a \emph{majority category}, has been introduced and studied in \cite{Hoe18a}. If we are to categorically reformulate the statement of Theorem~\ref{thm-UA-Pixley-characterization}, and also the statement (ii) of the Theorem~\ref{thm-UA-characterization}, then the base category should possess a corresponding notion of composition of binary relations, as well as a corresponding notion of image-factorization of a morphism. Regular categories \cite{BGO71} provide a good context for both of these notions, and it is for regular categories that we establish the categorical counterparts of the above-mentioned theorems. 

One of the consequences of one of the main theorems (Theorem~\ref{thm-pixley-generalization}) is that a regular \emph{Mal'tsev category} (see \cite{CPP91} and \cite{CLP91})  is congruence distributive if and only if it is a majority category. This result generalizes Pixley's result for varieties, and clarifies a remark of D.~Bourn in \cite{Bou05} about whether or not the categories  $\mathrm{NReg}(\Top)$ of topological von Neumann regular rings and $\mathrm{BoRg}(\Top)$ of topological Boolean rings are fully congruence distributive or not. 
\section{Preliminaries}
Recall that a category $\C$ is said to be \emph{regular} \cite{BGO71} if: 
\begin{enumerate}[(i)]
	\item $\C$ has finite limits and coequalizers of kernel pairs. 
	\item The class of regular epimorphisms in $\C$ is pullback stable, i.e., if the diagram
	\[
	\xymatrix{
		\bullet \ar[r] \ar[d]_p & \bullet \ar[d]^f\\
		\bullet \ar[r] & \bullet 
	}
	\]
	is a pullback in $\C$, and $f$ is a regular epimorphism, then so is $p$. 
\end{enumerate}
The following are important consequences (i) and (ii), and will be used without mention in what follows:
\begin{enumerate}
	\item Every morphism $f$ in $\C$ factors as $f = me$ where $e:X \rightarrow  Q$ is a regular epimorphism and $m:Q \rightarrow Y$ a monomorphism. The factorization $f = me$ is sometimes called an \emph{image factorization} of $f$. 
	\item If $f:X \rightarrow Y$ and $f':X' \rightarrow Y'$ are regular epimorphisms in $\C$, then $f \times f':X \times X' \rightarrow Y \times Y'$ is a regular epimorphism in $\C$. 
	\item If the composite $g \circ f$ of two morphisms $f: X \rightarrow Y$ and $g:Y \rightarrow Z$ in $\C$ is a regular epimorphism, then $g$ is a regular epimorphism.  
\end{enumerate}
If $m:M_0 \rightarrow X$ and $n:N_0 \rightarrow X$ are monomorphisms in a category $\C$, then we write $m \leqslant n$ if $m$ factors through $n$, i.e., if there exists $\phi:M_0 \rightarrow N_0$ such that $n\phi = m$. This defines a preorder $\mathcal{M}(X)$ on the class of all monomorphisms in $\C$ with codomain $X$. The posetal reflection of $\mathcal{M}(X)$ is called the \emph{poset of subobjects} of $X$, and is denoted by $\Sub(X)$. Explicitly, a \emph{subobject} $S \in \Sub(X)$ is an equivalence class of monomorphisms with codomain $X$, where two monomorphisms $n,m \in \mathcal{M}(X)$ are equivalent if and only if $n \leqslant m$ and $m \leqslant n$. If $s: S_0 \rightarrow X$ is a member of $S$, then we will say that $S$ is the \textit{subobject represented} by $s$ in what follows.  

In any category $\C$ the pullback of a monomorphism along any morphism is again a monomorphism, which is to say that if the diagram:
\[
\xymatrix{
	\bullet \ar[d]_n \ar[r] & \bullet \ar[d]^m \\
	\bullet \ar[r]_f & \bullet 
}
\]
is a pullback diagram in $\C$, and $m$ is a monomorphism, then so is $n$. Given that $\C$ has pullbacks of monomorphisms along monomorphisms and $A,B \in \Sub(X)$ are any subobjects represented by $a:A_0 \rightarrow X$ and $b:B_0 \rightarrow X$ respectively, then we write $A \cap B$ for the subobject of $X$ represented by the diagonal monomorphism in any pullback 
\[
\xymatrix{
	(A\cap B)_0\ar[d]_-{p_2} \ar[r]^-{p_1} \ar[dr]& A_0 \ar[d]^{a} \\
	B_0 \ar[r]_{b} & X
}
\]
\begin{remark} \label{rem-uniqueness-of-images}
	If $f = me$ and $f = m'e'$ are two image factorizations of a morphism $f:X \rightarrow Y$ in a regular category $\C$, then $m$ and $m'$ represent the same subobject of $Y$, which is denoted by $f(X)$. Given a subobject $A$ of $X$ represented by $a:A_0 \rightarrow X$ we will write $f(A)$ for the subobject represented by the mono part of an image factorization of $fa$. Also, we will often refer to $f(A)$ as the image of $A$ under $f$.
\end{remark}
\begin{definition} \label{def-factors-through}
	Given a subobject $A \in \Sub(X)$, represented by $a:A_0 \rightarrow X$, then for any morphism $x$ with codomain $X$ we write $x \eot{S} A$ if $x$ factors through $a$, and $x$ has domain $S$. 
	\[
	\xymatrix{
		& X \\
		S \ar@{..>}[r] \ar[ru]^{x} & A_0 \ar[u]_a
	}
	\]
\end{definition}

\begin{remark} \label{rem-reg-eot}
	If $x$ factors through one representative of $A$, then it factors through all representatives of $A$. If $\alpha: Q \rightarrow S$ is a regular epimorphism, then $x\alpha \eot{Q} A$ if and only if $x \eot{S} A$. 
\end{remark}
\begin{definition}
	Let $\C$ be a category with binary products, then an $n$-ary relation $R$ between  $X_1,X_2,...,X_n$ is simply a subobject of $X_1 \times X_2 \times \cdots \times X_n$. 
\end{definition}
\begin{definition}[\cite{Hoe18a}] \label{def-majority-selecting}
	A ternary relation $R$ between objects $X,Y,Z$ in a category with products $\C$ is said to be \emph{majority-selecting} if it satisfies:
	\[
	(x,y,z') \eot{S} R \quad \text{and} \quad (x,y',z) \eot{S} R \quad  \text{and} \quad (x',y,z) \eot{S} R  \implies (x,y,z) \eot{S} R.
	\]
	A category $\C$ (with products) is then a \emph{majority category} if every internal ternary relation in $\C$ is majority selecting.
\end{definition}
Let $R$ be a relation between objects $X$ and $Y$, and $S$ a relation between objects $Y$ and $Z$ in a regular category $\C$. Suppose that $r = (r_1,r_2):R_0 \rightarrow X \times Y$ and $s = (s_1,s_2):S_0 \rightarrow Y \times Z$ represent $R$ ans $S$ respectively. Consider the diagram:
\[
\xymatrix{
	& &P \ar[dl]_{p_1} \ar[dr]^{p_2} & & \\
	& R_0 \ar[dr]^{r_2} \ar[dl]_{r_1} & & S_0 \ar[dl]_{s_1}\ar[dr]^{s_2} & \\
	X & & Y & & Z
}
\]
where $(P, p_1, p_2)$ is a pullback of $s_1$ along $r_2$. The composite $R \circ S$ is the relation represented by the monomorphism $r \circ s: (R\circ S)_0 \rightarrowtail X \times Z$, which is obtained by taking the image factorization of $(r_1p_1, s_2p_2): P \rightarrow X \times Z$ as in the diagram:
\[
\xymatrix{
	P \ar@/_1pc/[rr]_-{(r_1p_1, s_2p_2)} \ar@{->>}[r]^-e &  (R\circ S)_0 \ar[r]^-{r \circ s} & X \times Z
}
\]
\begin{proposition} \label{prop-relation-composition}
	If $(x,z): A \rightarrow X \times Z$ is any morphism, then $(x,z) \eot{A} R\circ S$ if and only if there exists a regular epimorphism $\alpha: Q \rightarrow A$ and a $y:Q \rightarrow Y$ such that $(x\alpha, y)\eot{Q} R$ and $(y, z\alpha)\eot{Q} S$. 
\end{proposition}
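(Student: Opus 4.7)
The plan is to treat the two implications separately, both resting on a straightforward chase of the diagram defining $R\circ S$.

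For the easier reverse direction, suppose we are given a regular epimorphism $\alpha:Q\to A$ and $y:Q\to Y$ together with factorizations $u:Q\to R_0$ of $(x\alpha,y)$ through $r$ and $v:Q\to S_0$ of $(y,z\alpha)$ through $s$. Since $r_2u=y=s_1v$, the universal property of the pullback $(P,p_1,p_2)$ yields a unique $w:Q\to P$ with $p_1w=u$ and $p_2w=v$. Composing with $e$ and using that $(r\circ s)e=(r_1p_1,s_2p_2)$, we obtain
\[
(r\circ s)(ew)=(r_1u,s_2v)=(x\alpha,z\alpha)=(x,z)\alpha,
\]
so $(x,z)\alpha\eot{Q} R\circ S$. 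Remark \ref{rem-reg-eot} then upgrades this to $(x,z)\eot{A} R\circ S$.

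For the forward direction, assume $(x,z)\eot{A} R\circ S$, so there is $\phi:A\to(R\circ S)_0$ with $(r\circ s)\phi=(x,z)$. The key move is to pull the regular epimorphism $e:P\twoheadrightarrow(R\circ S)_0$ back along $\phi$, producing a regular epimorphism $\alpha:Q\to A$ (this is where regularity of $\C$, i.e.\ pullback stability of regular epis, is essential) together with $\psi:Q\to P$ satisfying $e\psi=\phi\alpha$. Set
\[
y:=r_2p_1\psi=s_1p_2\psi,
\]
the two expressions being equal because $P$ is the pullback of $s_1$ along $r_2$. A direct diagram chase using $(r\circ s)e=(r_1p_1,s_2p_2)$ gives $x\alpha=r_1p_1\psi$ and $z\alpha=s_2p_2\psi$; hence $(x\alpha,y)=r\circ(p_1\psi)$ and $(y,z\alpha)=s\circ(p_2\psi)$, exhibiting the required factorizations.

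The only real obstacle is the forward direction, where one needs to resist the temptation to factor $(x,z)$ through $(r_1p_1,s_2p_2)$ directly (which generally fails because $e$ need not split). Pullback stability of regular epimorphisms in a regular category is precisely what replaces such a splitting, delivering the regular epimorphism $\alpha:Q\to A$ along which everything lifts. Once $\alpha$ and $\psi$ are in hand, the remainder reduces to identifying the two components of the composite $(r\circ s)\phi\alpha$, which is a routine projection computation.
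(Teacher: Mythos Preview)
Your proposal is correct and follows essentially the same route as the paper's proof: in the forward direction you factor through $(R\circ S)_0$ and pull back $e$ along this factorization to obtain the regular epimorphism $\alpha$ and the map into $P$, then set $y=r_2p_1\psi$; in the reverse direction you use the pullback property to land in $P$ and invoke Remark~\ref{rem-reg-eot}. Your write-up is simply more explicit than the paper's (which leaves the reverse direction as ``easy to see''), but the argument is the same.
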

\begin{proof}
	If $(x,z)$ factors though $(R \circ S)_0$, then the dotted arrow $h$ exists  making the triangle in the diagram
	
	\[
	\xymatrix{
		Q\ar[d]_q \ar@{->>}[r]^-{\alpha} & A \ar[dr]^{(x,z)} \ar@{..>}[d]^h \\
		P \ar@/_1pc/[rr]_-{(r_1p_1, s_2p_2)} \ar@{->>}[r]^-e &  (R \circ S)_0 \ar[r]^-{r \circ s} & X \times Z
	}
	\]
	commute. Then we can pull $h$ back along $e$, to produce $\alpha$ and $q$ in the diagram above. Then setting $y = r_2p_1q$, we have that $\alpha$ and $y$ satisfy the required conditions.
	
	For the 'only if' part, suppose that $(x\alpha, y) \eot{Q} R$ and $(y, z \alpha) \eot{Q} S$, then it is easy to see that $(x\alpha,z\alpha) \eot{Q} R \circ S$ which by Remark~\ref{rem-reg-eot} implies that $(x,z) \eot{A} R \circ S$. 
\end{proof}
\section{Pixley's theorem for categories}
The first Mal'tsev type characterization of varieties admitting a majority term was given by Pixley (Theorem~\ref{thm-UA-Pixley-characterization}), and it states that a variety $\V$ admits a majority term if and only if for any three congruences $\alpha, \beta, \gamma$ on any algebra $A$ in $\V$ we have
	\[
	\alpha \cap (\beta \circ \gamma) = (\alpha \cap \beta) \circ (\alpha \cap \gamma).
	\]
The aim of this section is to establish the corresponding categorical  theorem.  
\begin{lemma} \label{lem-majority-implies-reflexive-relation-characterization}
	Let $\C$ be a regular majority category, then for any three reflexive relations $A,B,C$ on any object $X$ in $\C$ we have:
	\[
	(A \circ B) \cap (A \circ C) \leqslant  A \circ (B \cap C). 
	\]
\end{lemma}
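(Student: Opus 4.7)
Suppose $(x,z)\colon S\to X\times X$ factors through $(A\circ B)\cap(A\circ C)$. Applying Proposition~\ref{prop-relation-composition} twice yields regular epimorphisms $\alpha_i\colon Q_i\to S$ and morphisms $y_i\colon Q_i\to X$ ($i=1,2$) with $(x\alpha_1,y_1)\eot{Q_1}A$, $(y_1,z\alpha_1)\eot{Q_1}B$, $(x\alpha_2,y_2)\eot{Q_2}A$ and $(y_2,z\alpha_2)\eot{Q_2}C$. Pulling $\alpha_1$ and $\alpha_2$ back against each other produces a single regular epimorphism $\gamma\colon P\to S$ on which all four factorizations are simultaneously available, with $y_1,y_2$ now reinterpreted as morphisms $P\to X$.

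The central idea is to package the target into a single ternary relation. Let $a\colon A_0\to X\times X$, $b\colon B_0\to X\times X$, $c\colon C_0\to X\times X$ represent $A$, $B$, $C$, and form the wide pullback $W=A_0\times_X B_0\times_X C_0$ over the second coordinate of $a$ together with the first coordinates of $b$ and $c$. Let $R\subseteq X\times X\times X$ be the subobject given by the mono part of the image factorization of the evident map $W\to X^3$ whose three coordinates are the first coordinate of $a$, the second coordinate of $b$ and the second coordinate of $c$. By Proposition~\ref{prop-relation-composition} applied to the pair $A$, $B\cap C$, it is enough to verify $(x\gamma,z\gamma,z\gamma)\eot{P}R$: pulling back this factorization along the regular epimorphism $W\twoheadrightarrow R_0$ supplies a regular epi $\delta\colon Q\to P$ and a common middle witness $y\colon Q\to X$ with $(x\gamma\delta,y)\eot{Q}A$ and $(y,z\gamma\delta)\eot{Q}B\cap C$, and Remark~\ref{rem-reg-eot} descends the conclusion to $S$.

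To obtain $(x\gamma,z\gamma,z\gamma)\eot{P}R$, I would exhibit three tuples in $R$ fitting the majority-selecting pattern, the missing coordinates being supplied by reflexivity of $A$, $B$ and $C$. Concretely:
(i) $(x\gamma,z\gamma,y_1)\eot{P}R$ with middle witness $y_1$, using $(x\gamma,y_1)\eot{P}A$, $(y_1,z\gamma)\eot{P}B$ and the reflexivity of $C$ at $y_1$;
(ii) $(x\gamma,y_2,z\gamma)\eot{P}R$ with middle witness $y_2$, using $(x\gamma,y_2)\eot{P}A$, $(y_2,z\gamma)\eot{P}C$ and the reflexivity of $B$ at $y_2$;
(iii) $(z\gamma,z\gamma,z\gamma)\eot{P}R$ with middle witness $z\gamma$, using the reflexivities of $A$, $B$ and $C$ applied to $z\gamma$.
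These three tuples match the majority-selecting pattern $(u,v,w')$, $(u,v',w)$, $(u',v,w)$ for the target $(u,v,w)=(x\gamma,z\gamma,z\gamma)$ with $w'=y_1$, $v'=y_2$, $u'=z\gamma$, so majority-selectingness of $R$ forces the target into $R$.

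The only genuinely nontrivial step is the choice of the ternary relation $R$: it must be constructed so that majority-selectingness directly produces a simultaneous middle element witnessing both $B$ and $C$, and the three majority inputs must be arranged so that the three uses of reflexivity of $A$, $B$ and $C$ land in the three complementary coordinates. Once $R$ and the three tuples are in place, everything else reduces to routine manipulation of image factorizations and regular-epi covers, using only the regular-category machinery and Proposition~\ref{prop-relation-composition} recalled above.
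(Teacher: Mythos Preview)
Your proposal is correct and follows essentially the same argument as the paper: the ternary relation $R$ you build as the image of the wide pullback $A_0\times_X B_0\times_X C_0$ is exactly the paper's relation $R'$ (which the paper obtains by first forming a quaternary relation and then projecting out the middle coordinate), and your three majority inputs $(x\gamma,z\gamma,y_1)$, $(x\gamma,y_2,z\gamma)$, $(z\gamma,z\gamma,z\gamma)$ match the paper's choices up to renaming of variables. The only difference is cosmetic---you construct $R$ directly rather than via an intermediate quaternary relation---and the subsequent pullback-and-descend step is identical.
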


\begin{proof}
	Let $a = (a_1,a_2): A_0 \rightarrow X \times X$, $b = (b_1,b_2): B_0 \rightarrow X \times X$ and $c = (c_1, c_2): C_0 \rightarrow X \times X$ represent the three reflexive relations $A,B,C$ respectively. Consider the quaternary relation $R \leqslant X^4$ represented by $r$, which is formed from the following pullback:
	\[
	\xymatrix{
		R_0 \ar[d]_r \ar[rrrr]^{p_2}& & & & A_0 \times B_0 \times C_0 \ar[d]^{a \times b \times c} \\
		X^4 \ar[rrrr]_-{((\pi_2,\pi_1), (\pi_1, \pi_3), (\pi_1, \pi_4))} & & & & (X \times X) \times (X \times X) \times (X \times X).
	}
	\]
	\noindent
	Set theoretically, $R$ is the relation defined by:
	\[
	R = \{(x,y,z,w) \in X^4 \mid (y,x) \in A \wedge (x,z) \in B \wedge (x,w) \in C \}.
	\]
	Consider the image factorization $er'$ of $(\pi_2,\pi_3,\pi_4)r$ in the diagram below:
	\[
	\xymatrix{
		R_0 \ar[r]^r \ar@{->>}[d]_e & X^4\ar[d]^{(\pi_2,\pi_3,\pi_4)} \\
		R_0'\ar[r]_{r'} & X^3
	}
	\]
	Now, let $(x,y):S \rightarrow X\times X$ be such that $(x,y) \eot{S} (A \circ B) \cap (A \circ C)$ then there exist regular epis $\alpha_1:Q_1 \rightarrow S$ and $\alpha_2:Q_2 \rightarrow S$ as well as morphisms $z_1:Q_1 \rightarrow X$ and $z_2: Q_2 \rightarrow X$ such that $(x \alpha_1,z_1) \eot{Q_1} A$ and $(z_1,y\alpha_1) \eot{Q_1} B$, together with $(x \alpha_2,z_2) \eot{Q_2} A$ and $(z_2,y\alpha_2) \eot{Q_2} C$. We may assume that $\alpha_1 = \alpha = \alpha_2$,  since if not, we could pullback $\alpha_1$ along $\alpha_2$. Then note that we have that
	\[
	(z_1,x\alpha,y\alpha,z_1) \eot{Q} R \quad \text{ and } \quad (y\alpha,y\alpha,y\alpha,y\alpha) \eot{Q} R \quad \text{ and } \quad (z_2,x\alpha,z_2,y\alpha) \eot{Q} R,
	\]
	which implies that 
	\[
	(x\alpha,y\alpha,z_1) \eot{Q} R' \quad \text{ and } \quad (y\alpha,y\alpha,y\alpha) \eot{Q} R' \quad \text{ and } \quad (x\alpha,z_2,y\alpha) \eot{Q} R',
	\]
	and since $R'$ is majority selecting, it follows that $(x\alpha,y\alpha,y\alpha) \eot{Q} R'$. Thus, there exists $\phi:Q \rightarrow R_0'$ such that $r'\phi = (x,y,y)\alpha$. Now, take the pullback of $e$ along $\phi$, to obtain the diagram below: 
	\[
	\xymatrix{
		Q' \ar[r]^z \ar@{->>}[d]_{\alpha'} & R_0 \ar@{->>}[d]^{e} \ar[dr]^r \\ 
		Q \ar@{->>}[d]_{\alpha} \ar[r]^{\phi} & R_0' \ar[d]^{r'} & X^4\ar[dl]^{(\pi_2,\pi_3,\pi_4)} \\
		S \ar[r]_{(x,y,y)} & X^3
	}
	\]
	Then, if we let $p = \pi_1 r z$, it follows that $rz = (p, x\alpha\alpha', y\alpha\alpha', y\alpha\alpha')$. Now by construction of $R$, it follows that $(x\alpha\alpha', p) \eot{Q'} A$ and $(p,y\alpha\alpha') \eot{Q'} B \cap C$, so that
	\[
	(x\alpha\alpha', y\alpha\alpha')\eot{Q'} A \circ (B \cap C) \implies (x,y) \eot{S} A \circ (B \cap C).
	\]
	- by Remark~\ref{rem-reg-eot}. 
\end{proof}

\begin{lemma} \label{lem-effective-characterization-implies-majority}
	Let $\C$ be a regular category such that for any three effective equivalence relations $\alpha, \beta,\gamma$ on an object $X$, we have 
	\[
	\alpha \cap (\beta \circ \gamma) = (\alpha \cap \beta) \circ (\alpha \cap \gamma)
	\]
	then $\C$ is a majority category.
\end{lemma}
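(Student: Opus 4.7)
Given an arbitrary ternary relation $R$ between objects $X, Y, Z$ represented by $r = (p_X, p_Y, p_Z): R_0 \rightarrow X \times Y \times Z$, the idea is to apply the hypothesis to the three kernel pairs $\alpha$, $\beta$, $\gamma$ of $p_X$, $p_Y$, $p_Z$ respectively. These are automatically effective equivalence relations on $R_0$, so the assumed congruence identity applies, and everything reduces to decoding what it says at the level of generalized elements.

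Suppose that $(x,y,z')$, $(x,y',z)$, $(x',y,z): S \rightarrow X \times Y \times Z$ all factor through $R$, giving unique morphisms $u,v,w: S \rightarrow R_0$ with $ru = (x,y,z')$, $rv = (x,y',z)$ and $rw = (x',y,z)$. To show $(x,y,z) \eot{S} R$, first observe that since $p_X u = x = p_X v$, the pair $(u,v): S \rightarrow R_0 \times R_0$ factors through $\alpha$ by the universal property of the kernel pair. Next, $p_Y u = y = p_Y w$ and $p_Z w = z = p_Z v$, so using $w$ as the intermediate morphism (and the identity on $S$ as the regular epimorphism) in Proposition~\ref{prop-relation-composition} gives $(u,v) \eot{S} \beta \circ \gamma$. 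Combining the two yields $(u,v) \eot{S} \alpha \cap (\beta \circ \gamma)$.

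Applying the hypothesis, $(u,v) \eot{S} (\alpha \cap \beta) \circ (\alpha \cap \gamma)$, and Proposition~\ref{prop-relation-composition} produces a regular epimorphism $q: Q \rightarrow S$ together with a morphism $t: Q \rightarrow R_0$ satisfying $(uq, t) \eot{Q} \alpha \cap \beta$ and $(t, vq) \eot{Q} \alpha \cap \gamma$. Decoding these through the three kernel pairs yields $p_X t = xq$, $p_Y t = yq$ and $p_Z t = zq$, so that $rt = (x,y,z)q$. Therefore $(x,y,z)q \eot{Q} R$, and by Remark~\ref{rem-reg-eot} we conclude $(x,y,z) \eot{S} R$, as required.

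\textbf{Main obstacle.} The construction is essentially forced by the set-theoretic picture, and the only real subtlety is a bookkeeping one: correctly translating the intuition that ``$w$ witnesses $(u,v) \in \beta \circ \gamma$'' into the generalized-element language supplied by Proposition~\ref{prop-relation-composition}, and keeping track of the regular epimorphism $q$ produced when the hypothesis is applied so that the final appeal to Remark~\ref{rem-reg-eot} pulls $\eot{Q}$ back to $\eot{S}$.
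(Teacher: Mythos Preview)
Your proof is correct and follows essentially the same approach as the paper: pass to the kernel pairs of the three projections of $R_0$, apply the assumed congruence identity to a suitable pair among $u,v,w$, and read off the coordinates of the interpolant $t$. The only cosmetic difference is that the paper pairs up the morphisms corresponding to $(x,y,z')$ and $(x',y,z)$ (so $\beta$ plays the ``outer'' role in $\beta\cap(\alpha\circ\gamma)$), whereas you pair $(x,y,z')$ with $(x,y',z)$ (so $\alpha$ is outer); this is just a permutation of the roles and changes nothing of substance.
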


\begin{proof}
	Consider the ternary relation $R$ represented by the monomorphism $(r_1,r_2,r_3):R_0 \rightarrow X\times Y \times Z$, we will show that it is majority selecting in the sense of Definition~\ref{def-majority-selecting}. Let 
	\[
	x,x': S \rightarrow X, \quad y,y': S \rightarrow Y, \quad  z,z':S \rightarrow Z,
	\]
	and $a,b,c:S \rightarrow R_0$ be any morphisms in $\C$ such that the diagrams:
	\[
	\xymatrix{
		& R_0 \ar[d] & & R_0 \ar[d]  & & R_0 \ar[d]  \\
		S \ar[ru]^{a} \ar[r]_-{(x,y,z')} & X\times Y \times Z & S \ar[ru]^{b} \ar[r]_-{(x,y',z)} & X \times Y \times Z & S \ar[ru]^{c} \ar[r]_-{(x',y,z)} & X \times Y \times Z
	}
	\]
	commute. Consider the kernel congruences $\alpha, \beta, \gamma$ on $R$ formed from taking the kernel pairs of $r_1, r_2, r_3$ respectively. Then $(a,c) \eot{S} \beta \cap (\alpha \circ \gamma)\implies (a,c) \eot{S} (\beta \cap \alpha) \circ (\beta \cap \gamma)$, so that there exists a regular epimorphism $e:Q \rightarrow S$ and a morphism $b: Q \rightarrow R_0$ such that $(ae, b)\eot{Q} (\beta \cap \alpha)$ and $(b, c e) \eot{Q}(\beta \cap \gamma)$ by Proposition~\ref{prop-relation-composition}. This implies that $x e = r_1 b $ and $y e = r_2 b$ and $ze =  r_3 b$, and therefore $(x,y,z)e \eot{Q} R$ which implies that $(x,y,z) \eot{S} R$ by Remark~\ref{rem-reg-eot}. 
\end{proof}

\begin{theorem} \label{thm-pixley-generalization}
Let $\C$ be a regular category, then the following are equivalent for $\C$. 
\begin{enumerate}[(i)]
\item $\C$ is a majority category. 
\item For any three reflexive relations $A,B,C$ on any object $X$ in $\C$ we have:
\[
(A \circ B) \cap (A \circ C) \leqslant  A \circ (B \cap C).
\]
\item For any three reflexive relations $A,B,C$ on any object $X$ in $\C$ we have:
\[
A \cap (B\circ C) \leqslant  (A\cap B) \circ (A \cap C).
\]
\item For any equivalence relations  $\alpha,\beta,\gamma$ on any object $X$ in $\C$ we have:
\[
\alpha \cap (\beta\circ \gamma) =  (\alpha \cap \beta) \circ (\alpha \cap \gamma).
\]
\item For any effective equivalence relations  $\alpha,\beta,\gamma$ on any object $X$ in $\C$ we have:
\[
\alpha \cap (\beta\circ \gamma) =  (\alpha \cap \beta) \circ (\alpha \cap \gamma).
\]
\end{enumerate}
\end{theorem}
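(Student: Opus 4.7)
The plan is to establish the cycle of implications $(i) \Rightarrow (ii) \Rightarrow (i)$ together with $(i) \Rightarrow (iii) \Rightarrow (iv) \Rightarrow (v) \Rightarrow (i)$. Lemma~\ref{lem-majority-implies-reflexive-relation-characterization} already supplies $(i) \Rightarrow (ii)$ and Lemma~\ref{lem-effective-characterization-implies-majority} already supplies $(v) \Rightarrow (i)$, so four implications remain to be shown.

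Two of these are nearly immediate. The implication $(iv) \Rightarrow (v)$ is trivial, since effective equivalence relations are in particular equivalence relations. For $(iii) \Rightarrow (iv)$, the reverse inequality $(\alpha \cap \beta) \circ (\alpha \cap \gamma) \leqslant \alpha \cap (\beta \circ \gamma)$ is automatic in the equivalence-relation setting: the left-hand side is contained in $\alpha \circ \alpha = \alpha$ by transitivity of $\alpha$, and is clearly contained in $\beta \circ \gamma$; combined with $(iii)$, this yields the full equation of $(iv)$.

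For $(ii) \Rightarrow (i)$, I plan to reproduce the setup of Lemma~\ref{lem-effective-characterization-implies-majority}, using $(ii)$ in place of the Pixley equation. Given a ternary relation $R$ and morphisms $a, b, c: S \to R_0$ arising from a majority-selecting hypothesis, let $\alpha, \beta, \gamma$ denote the kernel pair equivalence relations of $r_1, r_2, r_3$. One observes that $(a,c) \eot{S} (\alpha \circ \beta) \cap (\alpha \circ \gamma)$: the intermediate witness for $\alpha \circ \beta$ is $a$ itself (using reflexivity of $\alpha$), and the witness for $\alpha \circ \gamma$ is $b$. Applying $(ii)$ yields a regular epimorphism $e: Q \to S$ and a morphism $d: Q \to R_0$ with $(ae, d) \eot{Q} \alpha$ and $(d, ce) \eot{Q} \beta \cap \gamma$; projecting, one finds $(r_1 d, r_2 d, r_3 d) = (xe, ye, ze)$, so $(x, y, z) \eot{S} R$ by Remark~\ref{rem-reg-eot}.

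The main obstacle will be $(i) \Rightarrow (iii)$. To address this I would mimic the quaternary relation construction of Lemma~\ref{lem-majority-implies-reflexive-relation-characterization}, but now tailored to the right-hand side $(A \cap B) \circ (A \cap C)$. One builds a quaternary relation on $X$ whose tuples simultaneously encode the given witness $z$ for $B \circ C$ and a sought witness $w$ for $(A \cap B) \circ (A \cap C)$, and then takes an appropriate ternary projection $R'$. The hope is that three tuples, assembled from the hypothesis data together with the reflexivity of $A, B, C$, become visible in $R'$, each differing from a common target tuple in exactly one coordinate, so that the majority-selecting property of $R'$ forces the target into $R'$ and produces the required $w$. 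Identifying the correct quaternary relation so that three such easily verifiable input tuples can be exhibited is the main technical difficulty of the proof.
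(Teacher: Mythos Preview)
Your direct argument for $(ii) \Rightarrow (i)$ is correct and neat, and your handling of $(iii) \Rightarrow (iv) \Rightarrow (v)$ is fine. The weak point is your plan for $(i) \Rightarrow (iii)$: you sketch the shape of a quaternary-relation argument in the style of Lemma~\ref{lem-majority-implies-reflexive-relation-characterization} but do not identify the relation or the three input tuples, and you flag this as the main difficulty. The paper avoids this difficulty entirely by proving $(ii) \Rightarrow (iii)$ instead, using only relation-algebra and no further categorical construction.

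Concretely, the paper first observes that $(ii)$ also yields the opposite form
\[
(B \circ A) \cap (C \circ A) \leqslant (B \cap C) \circ A
\]
by applying $(-)^{\op}$ twice. Then, for reflexive $A,B,C$,
\[
(A \cap B) \circ (A \cap C) \;\geqslant\; \bigl((A \cap B)\circ A\bigr) \cap \bigl((A \cap B)\circ C\bigr) \;\geqslant\; \bigl((A \cap B)\circ A\bigr) \cap (A \circ C) \cap (B \circ C) \;\geqslant\; A \cap (B \circ C),
\]
where the first inequality is $(ii)$ with $A \cap B$ in the role of the left factor, the second uses the opposite form on $(A \cap B)\circ C$, and the last uses reflexivity (so $(A\cap B)\circ A \geqslant A$ and $A \circ C \geqslant A$). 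With this in hand the cycle $(i)\Rightarrow(ii)\Rightarrow(iii)\Rightarrow(iv)\Rightarrow(v)\Rightarrow(i)$ closes, and your separate $(ii)\Rightarrow(i)$ becomes redundant. In short: rather than building a new majority-selecting argument for $(iii)$, you should derive $(iii)$ formally from $(ii)$.
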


\begin{proof}
	Note that if $\C$ satisfies $(ii)$, then for any three reflexive relations $A,B,C$ on an object $X$ in $\C$ we have
	\[
	(B \circ A) \cap (C \circ A) \leqslant  (B \cap C) \circ A.
	\]
	This is because we may take the double opposite of the left-hand side: 
	\begin{align*}
	(B \circ A) \cap (C \circ A) &= (((B \circ A) \cap (C \circ A))^{\op})^{\op} \\
	&= ((B \circ A)^{\op} \cap (C \circ A)^{\op})^{\op} \\
	&= ((A \circ B) \cap (A \circ C))^{\op} \leqslant (A \circ (B \cap C))^{\op} = (B \cap C) \circ A
	\end{align*}
	Now, to prove the theorem above it suffices to show the implication $(ii) \implies (iii)$, since Lemma~\ref{lem-majority-implies-reflexive-relation-characterization} gives $(i) \implies (ii)$, and Lemma~\ref{lem-effective-characterization-implies-majority} gives $(v) \implies (i)$, and $(iii) \implies (iv)$ is trivial. Suppose that $(ii)$ holds, then we have:
	\begin{align*}
	(A \cap B) \circ (A \cap C) &\geqslant ((A \cap B) \circ A) \cap ((A \cap B)\circ C) \\
	&\geqslant ((A \cap B) \circ A) \cap (A\circ C) \cap (B \circ C) \\
	&\geqslant A \cap (B \circ C),
	\end{align*}
	by repeated application of $(iv)$.
\end{proof}
Given a morphism $f: X \rightarrow Y$ in a regular category $\C$ and a subobject $S \leqslant X \times X$, we will write $f(S)$ for the subobject $(f \times f)(S)$, and similarly we write $f^{-1}(S)$ for $(f\times f)^{-1}(S)$.  If $\C$ is regular, then we have: 
\[
f^{-1}f(S) = K \circ S \circ K,
\]
where $K$ is the kernel equivalence relation on $X$ associated to $f$. D. Bourn showed in \cite{Bou05}, that a regular Mal'tsev category is congruence distributive if and only if for any regular epimorphism $f:X \rightarrow Y$ and any equivalence relations $\alpha, \beta \in \Eq{X}$, we have $f(\alpha \cap \beta) = f(\alpha) \cap f(\beta)$ (in fact this was shown more generally for Goursat categories). The proof of the following proposition is essentially the proof of Theorem~2.1 in \cite{Bou05}, however we include it for completeness. 
\begin{proposition} \label{prop-preserve-meet}
	Let $\C$ be a regular category, then the following are equivalent. 
	\begin{enumerate}[(i)]
		\item For any regular epimorphism $f:X \rightarrow Y$, and any reflexive relations $R,S \in \Eq{X}$ we have $f(R \cap S) = f(R) \cap f(S)$. 
		\item For any three reflexive relations $R,S,T$ on any object $X$ in $\C$, we have 
		\[
		(T \circ R \circ T) \cap (T \circ S \circ T) = (T \circ R \cap S \circ T). 
		\]
	\end{enumerate}
\end{proposition}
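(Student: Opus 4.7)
The proof will have two directions.

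For (ii) $\Rightarrow$ (i), the plan is a direct computation. Let $f \colon X \rightarrow Y$ be a regular epimorphism with kernel pair $K$, and let $R, S$ be reflexive relations on $X$. Since $K$ is reflexive (being an effective equivalence relation), applying (ii) with $T = K$ gives $(K \circ R \circ K) \cap (K \circ S \circ K) = K \circ (R \cap S) \circ K$. Using the identity $K \circ A \circ K = (f \times f)^{-1}((f \times f)(A))$ recalled before the proposition, together with the fact that inverse image preserves intersections, this rewrites as $(f \times f)^{-1}((f \times f)(R) \cap (f \times f)(S)) = (f \times f)^{-1}((f \times f)(R \cap S))$. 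Since $f \times f$ is a regular epimorphism, the direct image is a left inverse to $(f \times f)^{-1}$, so the latter is injective on subobjects of $Y \times Y$, and (i) follows.

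For (i) $\Rightarrow$ (ii), the inequality $T \circ (R \cap S) \circ T \leqslant (T \circ R \circ T) \cap (T \circ S \circ T)$ is automatic by the monotonicity of relation composition. For the reverse inequality, the plan is to apply condition (i) to the regular epimorphisms $t_1, t_2 \colon T_0 \rightarrow X$, which are split by the reflexivity section $\delta \colon X \rightarrow T_0$ of $T$ and hence are regular epimorphisms. For a reflexive relation $A$ on $X$, the subobject $(t_2 \times t_2)^{-1}(A)$ of $T_0 \times T_0$ is a reflexive relation on $T_0$ (its reflexivity on $T_0$ following from the reflexivity of $A$ on $X$), and an unpacking of the definitions shows that its image under $t_1 \times t_1$ is precisely $T \circ A \circ T^{\op}$. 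Applying (i) with $f = t_1$ to the reflexive relations $(t_2 \times t_2)^{-1}(R)$ and $(t_2 \times t_2)^{-1}(S)$ therefore yields the identity $T \circ (R \cap S) \circ T^{\op} = (T \circ R \circ T^{\op}) \cap (T \circ S \circ T^{\op})$, and the symmetric argument with $f = t_2$ and preimages along $t_1 \times t_1$ yields the companion identity $T^{\op} \circ (R \cap S) \circ T = (T^{\op} \circ R \circ T) \cap (T^{\op} \circ S \circ T)$.

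The main obstacle is to bridge these ``half-twisted'' identities, in which one factor is $T$ and the other is $T^{\op}$, to the untwisted identity (ii). Following the template of Theorem~2.1 in \cite{Bou05}, the plan is to iterate the applications of (i) on composite reflexive relations of the form $T \circ R \circ T^{\op}$, thereby obtaining the version of (ii) with $T$ replaced by the symmetric reflexive relation $T \circ T^{\op}$; exploiting the containment $T \leqslant T \circ T^{\op}$, which follows from the reflexivity of $T$, together with further applications of (i), one then recovers the desired untwisted identity for $T$ itself. This combinatorial assembly of the partial identities is the technical heart of the argument, and is the step where the regularity hypothesis on $\C$ and the careful bookkeeping of the reflexivity of the relations involved is most delicate.
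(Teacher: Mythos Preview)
Your argument for (ii) $\Rightarrow$ (i) is correct and is exactly the paper's: apply (ii) with $T$ the kernel pair $K$ of $f$, use $K\circ A\circ K = f^{-1}f(A)$, and conclude via the injectivity of $f^{-1}$ on subobjects of $Y\times Y$.

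For (i) $\Rightarrow$ (ii) the paper's sketch proceeds in one line: it asserts $t_2(t_1^{-1}(R)) = T\circ R\circ T$ and then computes
\[
(T\circ R\circ T)\cap(T\circ S\circ T)=t_2(t_1^{-1}(R))\cap t_2(t_1^{-1}(S))=t_2(t_1^{-1}(R)\cap t_1^{-1}(S))=t_2(t_1^{-1}(R\cap S))=T\circ(R\cap S)\circ T,
\]
invoking (i) for the regular (because split) epimorphism $t_2$. You have correctly observed that, with the paper's composition convention, one actually gets $t_2(t_1^{-1}(R)) = T^{\op}\circ R\circ T$ (and dually $t_1(t_2^{-1}(R)) = T\circ R\circ T^{\op}$), so the displayed chain is literally valid only when $T$ is symmetric---which is the situation in Bourn's original argument from which the sketch is borrowed, and is all that is needed for the direction (ii) $\Rightarrow$ (i) used in the subsequent Corollary.

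So in this direction you are being more careful than the paper. The problem is that your final paragraph is not a proof but a hope: passing to $T\circ T^{\op}$ gives a symmetric reflexive relation for which the identity holds, but the inclusion $T\leqslant T\circ T^{\op}$ does not by itself let you descend from $(T\circ T^{\op})\circ(R\cap S)\circ(T\circ T^{\op}) = \cdots$ back to the statement for $T$; monotonicity goes the wrong way on the right-hand side. If you want to keep the argument at the level of reflexive $T$, you need an explicit construction producing $T\circ R\circ T$ (not $T\circ R\circ T^{\op}$ or $T^{\op}\circ R\circ T$) as the $f$-image of an intersection-preserving pullback, and your proposal does not supply one. As it stands, neither the paper's sketch nor your bridging paragraph closes the gap for non-symmetric $T$; the paper simply elides it, and you have flagged it but not resolved it.
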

The proof below is essentially that which can be found in \cite{Bou05}, however we include a sketch for completeness. 
\begin{proof}[Proof Sketch]
	For $(i) \implies (ii)$: suppose that $(r_1,r_2):R_0 \rightarrow X \times X$ and $(s_1,s_2):S_0 \rightarrow X \times X$  and $(t_1,t_2):T_0 \rightarrow X \times X$ represent $R,S,T$ respectively. Note that since $T$ is reflexive, that $t_1$ and $t_2$ are regular epimorphisms (as they are split epimorphisms). Also, we have $t_2(t_1^{-1}(R)) = T \circ R \circ T$ and $t_2(t_1^{-1}(S)) = T \circ S\circ T$, so that: 
	\begin{align*}
	(T \circ R \circ T) \cap (T \circ S \circ T) &= t_2(t_1^{-1}(R))  \cap t_2(t_1^{-1}(S)) \\
	&= t_2(t_1^{-1}(R)  \cap t_1^{-1}(RS))  \\
	&= t_2(t_1^{-1}(R \cap S)) \\
	& = T \circ (R \cap S) \circ T.
	\end{align*}
	For $(ii) \implies (i)$: suppose that $f:X \rightarrow Y$ is any regular epimorphism, then we have that 
	\begin{align*}
	f(R \cap S) &= f(f^{-1}f(R \cap S)) \\
	&= f(K \circ (R\cap S) \circ K) \\
	&= f((K \circ R \circ K)\cap (K \circ S \circ K)) \\
	&= f(f^{-1}f(R) \cap f^{-1}f(S)) \\
	& = ff^{-1}(f(R) \cap f(S)) \\
	& = f(R) \cap f(S).
	\end{align*}
\end{proof}
\begin{corollary}
	For any regular epimorphism $f:X \rightarrow Y$ in a regular majority category $\C$, and any reflexive relations $R, S$ on $X$ we have
	\[
	f(R \cap S) = f(R) \cap f(S).
	\]
\end{corollary}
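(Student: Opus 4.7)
The plan is to reduce the statement to condition (ii) of Proposition~\ref{prop-preserve-meet}, and then derive that condition from Theorem~\ref{thm-pixley-generalization}. The corollary's conclusion $f(R \cap S) = f(R) \cap f(S)$ is exactly condition (i) of Proposition~\ref{prop-preserve-meet}, so it suffices to establish condition (ii): for any three reflexive relations $R,S,T$ on an object $X$,
\[
(T \circ R \circ T) \cap (T \circ S \circ T) = T \circ (R \cap S) \circ T.
\]

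The inequality $\geqslant$ is immediate from monotonicity of relational composition, since $R \cap S \leqslant R$ and $R \cap S \leqslant S$. For the nontrivial inequality $\leqslant$, I would exploit that $\C$ is a majority category in two passes, using Theorem~\ref{thm-pixley-generalization}. First, noting that $R \circ T$ and $S \circ T$ are reflexive whenever $R$, $S$, $T$ are, I apply condition (ii) of Theorem~\ref{thm-pixley-generalization} with $A := T$, $B := R \circ T$, $C := S \circ T$ to obtain
\[
(T \circ (R \circ T)) \cap (T \circ (S \circ T)) \leqslant T \circ ((R \circ T) \cap (S \circ T)).
\]
Second, I invoke the left-right dual of condition (ii), namely $(B \circ A) \cap (C \circ A) \leqslant (B \cap C) \circ A$, which is derived via opposites at the beginning of the proof of Theorem~\ref{thm-pixley-generalization}; applied with $A := T$, $B := R$, $C := S$ this yields
\[
(R \circ T) \cap (S \circ T) \leqslant (R \cap S) \circ T.
\]
Post-composing the latter on the left by $T$ (using monotonicity) and chaining with the former gives the desired bound.

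There is no substantial obstacle: the proof is a direct consequence of the two preceding results. The only subtlety to flag is that the argument uses Theorem~\ref{thm-pixley-generalization}(ii) together with its left-right dual in succession, and this is legitimate precisely because $R \circ T$ and $S \circ T$ are reflexive, so they remain admissible as arguments in the hypotheses of the theorem.
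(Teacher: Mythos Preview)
Your proof is correct and follows essentially the same approach as the paper: reduce to condition (ii) of Proposition~\ref{prop-preserve-meet} and then verify that condition using two applications of Theorem~\ref{thm-pixley-generalization}(ii), one in its original form and one in its left-right dual form. The paper applies these in the opposite order (first (ii) with $B=R$, $C=S$, then the dual with $B=T\circ R$, $C=T\circ S$), but this is an immaterial difference in the chain of inequalities.
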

\begin{proof}
	We show that any regular majority category $\C$ satisfies $(ii)$ of Proposition~\ref{prop-preserve-meet}: suppose that $R,S,T$ are reflexive relations on an object $X$ in $\C$. Then we always have:
	\[
	T \circ (R\cap S) \circ T \leqslant (T \circ R \circ T) \cap (T \circ S \circ T).
	\]
	For the reverse inequality, we have 
	\begin{align*}
	T \circ (R\cap S) \circ T &\geqslant ((T \circ R) \cap (T \circ S)) \circ T \\
	&\geqslant (T \circ R \circ T) \cap (T \circ S \circ T),
	\end{align*}
	 - by (ii) of Theorem~\ref{thm-pixley-generalization}. 
\end{proof}
Recall that if $\C$ is a regular Mal'tsev category, then for any two equivalence relations $\alpha, \beta$ on any object $X$ in $\C$, the join $\alpha \vee \beta$ exists, and is given by  
\[
\alpha \circ \beta = \alpha \vee \gamma.
\]
\begin{corollary}
	Let $\C$ be a regular Mal'tsev category. Then $\C$ is congruence distributive if and only if $\C$ is a majority category.
\end{corollary}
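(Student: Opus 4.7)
The plan is to derive this corollary essentially as a specialization of Theorem~\ref{thm-pixley-generalization} once the Mal'tsev hypothesis is used to rewrite joins of equivalence relations as compositions. Concretely, congruence distributivity of $\C$ means that for every object $X$ the poset $\Eq{X}$ of equivalence relations is a distributive lattice, i.e., $\alpha \cap (\beta \vee \gamma) = (\alpha \cap \beta) \vee (\alpha \cap \gamma)$ for all $\alpha,\beta,\gamma \in \Eq{X}$. So the first step is simply to unpack this definition.

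Next, I would invoke the fact recalled just before the corollary: in a regular Mal'tsev category, the join of two equivalence relations is computed by their composition, $\alpha \vee \beta = \alpha \circ \beta$. Applying this to both sides of the distributivity equation turns it into
\[
\alpha \cap (\beta \circ \gamma) = (\alpha \cap \beta) \circ (\alpha \cap \gamma),
\]
which is exactly condition (iv) of Theorem~\ref{thm-pixley-generalization}. Note that $\alpha \cap \beta$ and $\alpha \cap \gamma$ are again equivalence relations, so the right-hand side of the join-form agrees with the right-hand side of the composition-form without any extra work.

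From this point the corollary is immediate: Theorem~\ref{thm-pixley-generalization} asserts the equivalence of (i) and (iv), so $\C$ is a majority category if and only if the displayed identity holds for all equivalence relations on all objects, which (by the Mal'tsev rewriting) is exactly congruence distributivity. I would therefore organize the proof as two very short implications, each just a substitution of $\vee$ by $\circ$ and a citation of Theorem~\ref{thm-pixley-generalization}.

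I do not expect any genuine obstacle here; the only point requiring a moment's care is to confirm that the equivalence-relation instance of the identity (iv of Theorem~\ref{thm-pixley-generalization}) is the relevant one, rather than the effective-equivalence-relation instance (v) or the reflexive-relation instances (ii), (iii). Since all five conditions are already proved equivalent in the theorem, this is a non-issue, and the corollary follows in a few lines.
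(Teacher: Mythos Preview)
Your proposal is correct and matches the paper's own (implicit) argument: the corollary is stated immediately after recalling that in a regular Mal'tsev category $\alpha \vee \beta = \alpha \circ \beta$, and is intended to follow directly from the equivalence (i)$\Leftrightarrow$(iv) of Theorem~\ref{thm-pixley-generalization} via exactly the substitution you describe.
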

The notion of a \textit{protoarithmetical} category, which was first introduced by D.~Bourn in \cite{Bou02}, has a strong relation to majority categories: every finitely complete Mal'tsev majority category is protoarithmetical (see \cite{Hoe18a}), and a Barr exact category $\C$ is protoarithmetical if and only if it is both Mal'tsev and a majority category. In the regular context, we have the following characterization of protoarithmetical categories in terms of a certain \emph{weak congruence distributivity}. 
\begin{theorem*}[\cite{Bou01}]
	For a regular Mal'tsev category $\C$ the following are equivalent. 
	\begin{enumerate}
		\item $\C$ is a protoarithmetical category.
		\item For any three equivalence relations $\alpha, \beta, \gamma$ on any object $X$ in $\C$, if $\alpha \cap\beta = \Delta_X$ and $\alpha \cap \gamma = \Delta_X$ then $\alpha \cap(\beta \vee \gamma) = \Delta_X$. 
	\end{enumerate}
\end{theorem*}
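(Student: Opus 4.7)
The plan rests on the Mal'tsev identity $\beta \vee \gamma = \beta \circ \gamma$ for equivalence relations on any object of $\C$, noted just before the statement of the theorem. Under this identification, condition $(2)$ reads: whenever $\alpha \cap \beta = \Delta_X$ and $\alpha \cap \gamma = \Delta_X$, one has $\alpha \cap (\beta \circ \gamma) = \Delta_X$. This reformulation makes both directions amenable to the machinery already built up in this paper, in particular the Pixley-style identity of Theorem~\ref{thm-pixley-generalization}(iv) and the image-preservation identity of Proposition~\ref{prop-preserve-meet}.

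For the direction $(1) \Rightarrow (2)$, I would exploit the fact that a protoarithmetical Mal'tsev category satisfies, in each fibre of the fibration of points, the arithmetical property (Mal'tsev plus congruence distributivity). Combined with the Mal'tsev hypothesis on $\C$ itself, this should yield the Pixley identity $\alpha \cap (\beta \circ \gamma) = (\alpha \cap \beta) \circ (\alpha \cap \gamma)$ for any three equivalence relations on any object of $\C$. Specializing to $\alpha \cap \beta = \alpha \cap \gamma = \Delta_X$ collapses the right-hand side to $\Delta_X \circ \Delta_X = \Delta_X$, giving $(2)$. In the Barr exact case this is immediate, since the excerpt notes that protoarithmetical exact categories coincide with Mal'tsev majority categories, so Theorem~\ref{thm-pixley-generalization} applies directly; the regular case should then follow by embedding $\C$ in its exact completion and verifying that the relevant subobject joins and intersections are preserved.

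For the direction $(2) \Rightarrow (1)$, the strategy is to verify the defining property of protoarithmeticity fibre-by-fibre over the fibration of points. An equivalence relation in the fibre over $Y$ corresponds to an equivalence relation on an object over $Y$ that is compatible with the structural map, and the Mal'tsev hypothesis passes to each fibre, giving modularity of the congruence lattices there. The task then becomes to show that condition $(2)$ descends to every fibre and that, in a modular lattice, the weak distributivity expressed by $(2)$ is enough to force the arithmetical structure. The main obstacle is precisely this last point: condition $(2)$ restricts attention to pairs with trivial meet, so one must bootstrap from these special cases — using modularity and the composition-as-join identity afforded by Mal'tsev — to recover the stronger distributive behaviour characterizing protoarithmetical categories. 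A careful fibre-wise analysis, combined with the image-preservation property from Proposition~\ref{prop-preserve-meet} applied to suitable regular epimorphisms, should make this bootstrap work; getting this transfer correct is where I would expect the subtlest technical difficulties to arise.
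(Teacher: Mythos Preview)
The paper does not prove this theorem; it is quoted verbatim from Bourn's work \cite{Bou01} and used only as context for the discussion of weak versus full congruence distributivity that follows. There is therefore no proof in the paper to compare your proposal against.

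That said, your proposed argument for $(1)\Rightarrow(2)$ contains a genuine gap. You want to pass from protoarithmeticity to the Pixley identity $\alpha\cap(\beta\circ\gamma)=(\alpha\cap\beta)\circ(\alpha\cap\gamma)$ and then specialise. But by Theorem~\ref{thm-pixley-generalization} that identity is \emph{equivalent} to $\C$ being a majority category, and the very paragraph following the theorem you are trying to prove records that \cite{Hoe18a} constructs a regular protoarithmetical category which is \emph{not} a majority category. Hence in the merely regular setting protoarithmeticity does not give you the Pixley identity, and your route through it collapses. Your fallback of passing to the exact completion does not rescue this: the exact completion of a regular Mal'tsev category need not remain Mal'tsev (let alone protoarithmetical), so the transfer you need is not available. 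Bourn's original argument works directly from the fibre-wise definition of protoarithmeticity and does not detour through the full Pixley identity; if you want to reconstruct a proof you should start from that definition rather than from the majority/Pixley characterisation developed in this paper.
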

For example, in \cite{Bou05}, the author remarks that the categories $\mathrm{VonReg}(\Top)$ of topological Von Neumann regular rings, or $\mathrm{BoRg}(\Top)$ of topological Boolean rings, are both regular protoarithmetical categories, and therefore they satisfy the above weak version of congruence distributivity. It is then remarked that 'it is far less clear, at this point, if they are fully congruence
distributive or not'. However, the corollary above shows that it is indeed the case that they are fully congruence distributive (since they are majority categories). The general question of whether weak congruence distributivity is equivalent to full congruence distributivity, is given in the negative in \cite{Hoe18a}. There the author constructs a regular protoarithmetical category which is not a majority category, and hence which is not congruence distributive. 

\section{Bergman's Double-projection Theorem for regular categories} 
If $S$ is any sublattice of a finite product of lattices $L_1 \times \cdots \times L_n$,  then $S$ is uniquely determined by its images under the canonical projections $\pi_{i,j}:L_1 \times \cdots \times L_n \rightarrow L_i \times L_j$. As mentioned in the introduction, this is property of $\Lat$ extends to a characterization of varieties which admit a majority term. 
\begin{theorem*}[K.A. Baker and A.F. Pixley \cite{BP75}]
	The following are equivalent for a variety $\V$ of algebras. 
	\begin{enumerate}
		\item $\V$ admits a majority term. 
		\item Any subalgebra $S$ of a finite product $\prod\limits_{i = 1}^n A_i$ of algebras in $\V$ is uniquely determined by its two-fold images under the canonical projections $\pi_{i,j} :\prod\limits_{i = 1}^n A_i \rightarrow A_i \times A_j$. 
	\end{enumerate}
\end{theorem*}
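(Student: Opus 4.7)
The plan is to prove the Baker--Pixley theorem in the classical variety-theoretic setting by establishing the two implications (1)$\implies$(2) and (2)$\implies$(1) separately, using the free algebra on two generators for the harder direction.

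For (1)$\implies$(2), suppose $\V$ admits a majority term $m(x,y,z)$, and let $S,T$ be subalgebras of $A = A_1 \times \cdots \times A_n$ with $\pi_{i,j}(S) = \pi_{i,j}(T)$ for all $i,j$. I would prove the stronger statement that $\pi_I(S) = \pi_I(T)$ for every subset $I \subseteq \{1,\dots,n\}$ by induction on $|I|$. The base case $|I| = 2$ is the hypothesis. For the inductive step with $|I| = k \geq 3$ and $s \in \pi_I(S)$, the idea is to use the inductive hypothesis applied to the $(k-1)$-element subsets $I \setminus \{i\}$ for three chosen indices $i_1,i_2,i_3 \in I$ to produce elements $t^{(i_1)}, t^{(i_2)}, t^{(i_3)} \in \pi_I(T)$, each of which agrees with $s$ on all coordinates except possibly one. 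Then $m(t^{(i_1)}, t^{(i_2)}, t^{(i_3)})$ computed coordinate-wise in $\pi_I(T)$ recovers $s$ exactly: at coordinates outside $\{i_1,i_2,i_3\}$ all three agree, while at each $i_\ell$ the other two already match $s_{i_\ell}$, so the majority equations force the correct value. Setting $I = \{1,\dots,n\}$ yields $S = T$.

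For (2)$\implies$(1), the approach is to produce a majority term directly by testing (2) on one carefully chosen subalgebra. Let $F = F_\V(x,y)$ be the free algebra on two generators and work inside $F^3$. Set
\[
a = (x,x,y), \qquad b = (x,y,x), \qquad c = (y,x,x), \qquad d = (x,x,x),
\]
and let $S = \mathrm{Sg}(a,b,c)$ and $T = \mathrm{Sg}(a,b,c,d)$. A direct inspection shows that each two-fold projection $\pi_{i,j}(d) = (x,x)$ already lies in $\pi_{i,j}(S)$ (for instance $\pi_{1,2}(d) = \pi_{1,2}(a)$), so $\pi_{i,j}(S) = \pi_{i,j}(T)$ for every pair. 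Hypothesis (2) then forces $S = T$, hence $d \in S$. This means $d = t(a,b,c)$ for some ternary term $t$, and evaluating each coordinate separately gives the three equations $t(x,x,y) = t(x,y,x) = t(y,x,x) = x$ in $F$, i.e.\ $t$ is the required majority term.

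The main obstacle is the inductive combinatorics in (1)$\implies$(2): one must notice that it is essential to strengthen the inductive statement to arbitrary projections $\pi_I$ rather than only the top-level equality $S = T$, since a direct argument comparing $S$ and $T$ has no handle on the coordinates where the three auxiliary elements $t^{(i_\ell)}$ might disagree with $s$. The subtlety is choosing precisely three ``bad'' coordinates $i_1, i_2, i_3$ and confining all mismatches to them, so that the three majority equations applied coordinate-wise can simultaneously repair everything. By contrast, direction (2)$\implies$(1) is essentially a one-line calculation once the correct three-generator subalgebra of $F^3$ is written down, and requires no further induction.
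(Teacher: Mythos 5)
Your proof is correct, and both directions are the standard Baker--Pixley argument: the (2)$\implies$(1) direction via the elements $(x,x,y),(x,y,x),(y,x,x)$ and $(x,x,x)$ of $F_\V(x,y)^3$ is exactly the classical derivation of the majority term, and your strengthened induction on $\pi_I(S)=\pi_I(T)$ for $|I|\geqslant 2$, repairing at most three bad coordinates with a single coordinatewise application of $m$, is the classical proof of the other direction. Be aware, though, that the paper does not prove this statement at all: it is quoted from \cite{BP75} as known background, and the body of the paper is devoted to the regular-categorical generalization (Theorem~\ref{thm-main-characterization}). Comparing your route with the paper's categorical one is still instructive. For the analogue of your (2)$\implies$(1), the paper (Theorem~\ref{thm-relationDecompositionImpiesMajority}) cannot invoke free algebras or generators, and instead shows that any ternary relation $R\leqslant A\times B\times C$ must equal the pullback $P$ of its three two-fold images, with $P$ automatically majority-selecting; your free-algebra computation is shorter but intrinsically varietal. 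For the analogue of your (1)$\implies$(2), the paper routes through the Pairwise Chinese Remainder Theorem (Lemma~\ref{lem-majorityImpliesPCRT} followed by Lemma~\ref{lem-PCRT-implies-subobject-decompositions}), precisely because your elementwise induction on coordinates --- picking a tuple $s$ and lifting its restrictions --- has no direct meaning for subobjects and must be replaced by approximate (regular-epi-indexed) solutions. In short, your argument buys concreteness and brevity in the varietal setting; the paper's machinery buys the generalization to arbitrary regular categories.
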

The aim of this section is to generalize this theorem to a characterization of regular majority categories. 
\begin{definition}
	Let $\C$ be a regular category and let $I$ be a set, and $J \subseteq I$ any subset. Suppose that $(A_i)_{i \in I}$ is a family of objects in $\C$, such that both products $\prod\limits_{i \in I} A_i$ and  $\prod\limits_{j \in J} A_j$ exist. Then for any subobject $S$ of $\prod\limits_{i \in I} A_i$, the image of $S$ under the canonical morphism: 
	\[
	\prod\limits_{i \in I} A_i \xrightarrow{\pi_{J}} \prod\limits_{j \in J} A_j, 
	\]
	is called the $J$-image of $S$ in $\prod\limits_{j \in J} A_j$ and is denoted by $S_J$.
\end{definition}
\begin{definition}
	Let $\C$ be a regular category and let $I$ be a set and $\mathcal{J} = (I_j)_{j \in J}$ a family of subsets of $I$. Then the product $\prod\limits_{i \in I} A_i$ (if it exists) is said to have $\mathcal{J}$-fold subobject decompositions if it satisfies the following property: for any two subobjects $S,T$ of $\prod\limits_{i \in I} A_i$, if $S_{I_j} = T_{I_j}$ for any $j \in J$, then $S = T$. In other words, we say that every subobject of  $\prod\limits_{i \in I} A_i$ is uniquely determined by its $\mathcal{J}$-fold images.  If every product  indexed by $I$ (which exists) has $\mathcal{J}$-fold subobject decompositions, then we say that $\C$ has $\mathcal{J}$-fold subobject decompositions. 
\end{definition}
\begin{proposition} \label{prop-J-fold-subobject-decompositions}
	Let $\C$ be a regular category, and let $I$ be a set and $\mathcal{J} = (I_j)_{j \in J}$ a family of subsets of $I$. The following are equivalent for a family $(A_i)_{i \in I}$  of objects in $\C$.
	\begin{enumerate}[(i)]
		\item $\prod\limits_{i \in I} A_i$ has $\mathcal{J}$-fold subobject decompositions.
		\item For any monomorphism $s: S \rightarrow \prod\limits_{i \in I} A_i$, the diagram
		\[
		\xymatrix{
			S\ar[d]_s \ar[rrr]^-{(e_{I_j})_{j \in I}} & & &\prod\limits_{j\in J} S_{I_j} \ar[d]^-{\prod\limits_{j \in J}s_{I_j}} \\
			\prod\limits_{i\in I} A_i \ar[rrr]_-{(\pi_{I_j})_{j \in J}} & & & \prod\limits_{j\in J} (\prod\limits_{k \in I_j} A_k)
		}
		\]
		is a pullback, where
		\[
		S \xrightarrow{e_{I_j}} S_{I_j} \xrightarrow{s_{_{I_j}}} \prod\limits_{k \in I_j} A_k
		\]
		is a regular epi, mono factorization of $\pi_{I_j} s$. 
	\end{enumerate}
\end{proposition}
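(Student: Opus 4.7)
The plan is to prove both implications by comparing $S$ to the canonical pullback $P$ determined by the cospan in question.

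For (ii) $\Rightarrow$ (i), I would take two subobjects $S,T$ of $\prod_{i\in I}A_i$ with $S_{I_j}=T_{I_j}$ in $\Sub(\prod_{k\in I_j}A_k)$ for every $j\in J$, and show $S=T$. Applying (ii) to both $S$ and $T$ produces two pullback squares whose right vertical and bottom horizontal arrows may be taken to coincide, by choosing compatible monic representatives of the equal subobjects $S_{I_j}=T_{I_j}$ and taking their products. Uniqueness of pullbacks of a fixed cospan then forces the induced comparison between $S$ and $T$ to be an isomorphism over $\prod_{i\in I}A_i$, yielding $S=T$ as subobjects.

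For (i) $\Rightarrow$ (ii), I would let $P$ be the pullback of $\prod_{j\in J}s_{I_j}$ along $(\pi_{I_j})_{j\in J}$, with canonical projections $p\colon P\to \prod_{i\in I}A_i$ and $q\colon P\to \prod_{j\in J}S_{I_j}$. Since a product of monomorphisms is a monomorphism and pullbacks of monomorphisms are monomorphisms, $p$ is monic, so $P$ represents a subobject of $\prod_{i\in I}A_i$. The defining identities $\pi_{I_j}s = s_{I_j}e_{I_j}$ assemble into a cone, yielding a unique $h\colon S\to P$ with $ph=s$ and $qh=(e_{I_j})_{j\in J}$, which witnesses $S\leqslant P$. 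To establish the reverse inequality I would invoke (i), so it suffices to verify $P_{I_j}=S_{I_j}$ for every $j\in J$. Monotonicity of images applied to $S\leqslant P$ gives $S_{I_j}\leqslant P_{I_j}$, while the pullback identity $\pi_{I_j}p = s_{I_j}(\pi_j q)$, where $\pi_j$ denotes the $j$-th projection on $\prod_{k\in J}S_{I_k}$, exhibits $\pi_{I_j}p$ as factoring through the monomorphism $s_{I_j}$; by uniqueness of image factorizations (Remark~\ref{rem-uniqueness-of-images}) the image $P_{I_j}$ then factors through $s_{I_j}$, i.e., $P_{I_j}\leqslant S_{I_j}$. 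Therefore $S=P$ as subobjects, making $h$ an isomorphism and the original square a pullback.

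The argument is largely formal bookkeeping with pullbacks, products, and image factorizations; the only step I expect to require any care is the comparison $P_{I_j}\leqslant S_{I_j}$, where one must recognise that the pullback projection $q$ packages precisely the data needed to factor $\pi_{I_j}p$ through $s_{I_j}$, so that hypothesis (i) can be applied to collapse the componentwise equalities $S_{I_j}=P_{I_j}$ into the global equality $S=P$.
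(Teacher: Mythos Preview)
Your proof is correct and follows essentially the same route as the paper: form the pullback $P$, exhibit the comparison $S\to P$, verify that $P$ has the same $I_j$-images as $S$, and then invoke (i). The only minor difference is in the verification of $P_{I_j}=S_{I_j}$: the paper observes that the composite $\pi_j q$ is a regular epimorphism (since $e_{I_j}$ factors through it), so that $s_{I_j}\circ(\pi_j q)$ \emph{is} the image factorisation of $\pi_{I_j}p$, giving $P_{I_j}=S_{I_j}$ in one stroke; you instead sandwich via $S_{I_j}\leqslant P_{I_j}$ (monotonicity) and $P_{I_j}\leqslant S_{I_j}$ (factoring through the mono $s_{I_j}$). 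Both arguments are valid and amount to the same thing.
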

\begin{proof}
	For (i) $\implies$ (ii):  let $s: S \rightarrow \prod\limits_{i \in I} A_i$ be any monomorphism, and consider the diagram below where the square is a pullback: 
	\[
	\xymatrix{
		S\ar@/_1pc/[ddr]_s \ar@/^1pc/[rrrrd]^-{(e_{I_j})_{j \in I}} \ar@{..>}[rd]^\phi	& & & & \\
		&	P\ar[rrr]^{\alpha} \ar[d]^p& & & \prod\limits_{j\in J} S_{I_j} \ar[d]^-{\prod\limits_{j \in J}s_{I_j} = \beta} & \\
		&	\prod\limits_{i\in I} A_i \ar[rrr]_-{(\pi_{I_j})_{j \in J}} & & & \prod\limits_{j\in J} (\prod\limits_{k \in I_j} A_k) 
	}
	\]
	By construction, the outer rectangle commutes, so that the dotted arrow $\phi$ exists. We claim that the subobject represented by $p$ has the same two fold images as the subobject represented by $s$. Let $j \in J$ be any element then since $(\pi_j\alpha) \phi = e_{I_j}$ is a regular epimorphism,  $\pi_j \alpha$ is also a regular epimorphism. Then the factorization  $s_{I_j} (\pi_j\alpha) = \pi_{I_j} p$ is an image factorization, therefore the $I_j$-image of the subobject represented by  $p$ in $\prod\limits_{i \in I} A_i$ is $S_{I_j}$. Therefore, the subobjects represented by $s$ and $p$ are the same, so that $\phi$ is an isomorphism, which implies that the outer rectangle is a pullback.  Finally, (ii) $\implies$ (i) follows from the universal property of pullback.
\end{proof}
\begin{definition}  \label{def-k-fold-subobject-decompositions}
	Let $I$ be any set. For any regular category $\C$, we say that $\C$ has $k$-fold subobject decompositions over $I$ (where $k$ is a positive integer) if it has $\mathcal{J}$-fold decompositions, where $\mathcal{J}$ is the set of all subsets of $I$ of size $k$.  If $I$ is a countable set, then we say that $\C$ has countable $k$-fold subobject decompositions. If $\C$ has $k$-fold subobject decompositions over any finite set, then $\C$ is said to have finite $k$-fold subobject decompositions.   
\end{definition}
\begin{theorem} \label{thm-relationDecompositionImpiesMajority}
	Let $\C$ be a regular category. If $\C$ has finite 2-fold subobject decompositions, then $\C$ is a majority category.
\end{theorem}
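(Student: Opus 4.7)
The plan is to apply Proposition~\ref{prop-J-fold-subobject-decompositions} directly, taking $I = \{1,2,3\}$ and $\mathcal{J}$ to be the collection of all two-element subsets of $I$. Since $\C$ has finite $2$-fold subobject decompositions, for any ternary relation $R$ represented by a monomorphism $r: R_0 \rightarrowtail A_1 \times A_2 \times A_3$ the square
\[
\xymatrix{
R_0 \ar[d]_r \ar[rr] & & R_{12} \times R_{13} \times R_{23} \ar[d] \\
A_1 \times A_2 \times A_3 \ar[rr]_-{(\pi_{12}, \pi_{13}, \pi_{23})} & & (A_1 \times A_2) \times (A_1 \times A_3) \times (A_2 \times A_3)
}
\]
is a pullback, where $R_{ij}$ denotes the two-fold image of $R$ in $A_i \times A_j$ and the right-hand vertical is the product of the three representing monomorphisms. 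The proof will then reduce the majority-selecting condition for $R$ (Definition~\ref{def-majority-selecting}) to the universal property of this pullback.

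To this end, suppose $(x_1, x_2, x_3'),\ (x_1, x_2', x_3),\ (x_1', x_2, x_3) \eot{S} R$. Post-composing the first factorization with $\pi_{12}$, the second with $\pi_{13}$, and the third with $\pi_{23}$, and using that $R_{ij}$ is by definition the image of the composite $\pi_{ij} \circ r$, one obtains
\[
(x_1, x_2) \eot{S} R_{12}, \qquad (x_1, x_3) \eot{S} R_{13}, \qquad (x_2, x_3) \eot{S} R_{23}.
\]
These three factorizations assemble into a morphism $S \to R_{12} \times R_{13} \times R_{23}$ whose composite with the right-hand vertical of the displayed square agrees with the composite of $(x_1, x_2, x_3): S \to A_1 \times A_2 \times A_3$ with the bottom arrow. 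The pullback property then supplies a unique lift $S \to R_0$, which witnesses $(x_1, x_2, x_3) \eot{S} R$; hence $R$ is majority-selecting.

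No major obstacle is anticipated. The one place requiring care is the passage from, say, $(x_1, x_2, x_3') \eot{S} R$ to $(x_1, x_2) \eot{S} R_{12}$: a witnessing map $h: S \to R_0$ with $rh = (x_1, x_2, x_3')$ yields $\pi_{12} r h = (x_1, x_2)$, and the image factorization $\pi_{12} r = s_{12} \circ e_{12}$ then exhibits $(x_1, x_2)$ as factoring through the representing monomorphism $s_{12}$ of $R_{12}$ (using Remark~\ref{rem-uniqueness-of-images} to know this image is well defined). Once the three two-fold factorizations are secured, the pullback supplied by Proposition~\ref{prop-J-fold-subobject-decompositions} does all the remaining work.
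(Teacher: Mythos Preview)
Your proposal is correct and follows essentially the same approach as the paper. The paper phrases it slightly differently---it first constructs the pullback $P$ of the bottom and right-hand arrows, observes (without detail) that $P$ is majority-selecting, and then invokes Proposition~\ref{prop-J-fold-subobject-decompositions} to conclude $P = R$---whereas you invoke Proposition~\ref{prop-J-fold-subobject-decompositions} first to identify $R$ itself as the pullback and then verify majority-selecting directly via the universal property; but this is the same argument, and you have simply spelled out the step the paper leaves as ``easily seen''.
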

\begin{proof}
Suppose that $R$ is any subobject of $A \times B \times C$ represented by $(r_1,r_2,r_3): R_0 \rightarrow A \times B \times C$. Let $r_{1,2}: R_{1,2} \rightarrow A \times B$ and $r_{1,3}:R_{1,3} \rightarrow A \times C$ and $r_{2,3}:R_{2,3} \rightarrow B \times C$ be the monomorphisms formed from taking the mono part of the image-factorization of $(r_1,r_2)$, $(r_1,r_3)$ and $(r_2,r_3)$ respectively. Consider the pullback square below:
	\[
	\xymatrix{ 
		P_0 \ar[rrrr] \ar[d]_{p = (p_1,p_2,p_3)} & & & & R_{1,2} \times R_{1,3} \times R_{2,3} \ar[d]^{r_{1,2}\times r_{1,3}\times r_{2,3}} \\
		A \times B\times C \ar[rrrr]_-{((\pi_1, \pi_2),(\pi_1, \pi_3),(\pi_2, \pi_3))} & &  & & (A\times B) \times (A \times C) \times (B \times C)
	}
	\]
	It is easily seen that $P$ is majority selecting in the sense of Definition~\ref{def-majority-selecting}, and therefore by Proposition~\ref{prop-J-fold-subobject-decompositions} we have $P = R$, so that $R$ is majority selecting.  
\end{proof}
Given any relation $R$ on a product $X \times Y$, we can consider the image of $R$ under the canonical projections $(X\times Y)^2 \rightarrow X^2$ and $(X\times Y)^2 \rightarrow Y^2$ which give two relations $R_1$ and $R_2$ on $X$ and $Y$, respectively. Conversely, given $R_1$ and $R_2$ represented by $r_1:R_0 \rightarrow X \times X$ and $r_2:R_0' \rightarrow Y \times Y$ respectively, then the composite morphism
\[
R_0 \times R_0' \xrightarrow{r_1 \times r_2} (X \times X) \times (Y \times Y) \xrightarrow{\phi} (X \times Y)^2
\]
is a mono (where $\phi$ is the canonical 'transpose' isomorphism), which represents a relation $R_1 \times_T R_2$ on $(X \times Y)$. Note, that we always have $R \leqslant R_1 \times_T R_2$. 
\begin{definition}
	A  regular category $\C$ is said to have \emph{directly decomposable reflexive relations}, if for any reflexive relation $R$ on a product $X \times Y$ in $\C$, we have $R_1 \times_T R_2 = R$.
\end{definition}
\begin{example}
	The category $\mathbf{Ring}$ of unitary rings has directly decomposable reflexive relations, and the category $\Grp$ does not. For a proof of this, we refer the reader to Example~3.9 in \cite{HoePhd}. 
\end{example}
\begin{proposition}
	Let $\C$ be any regular category with which has finite two-fold subobject decompositions. Then $\C$ has directly decomposable reflexive relations. 
\end{proposition}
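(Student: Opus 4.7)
The plan is to view $R$ as a subobject of the four-fold product $(X \times Y)^2 \cong X \times Y \times X \times Y$, indexed by $I = \{1,2,3,4\}$, and to apply the hypothesis of finite 2-fold subobject decompositions to $I$. Both $R$ and $R_1 \times_T R_2$ are subobjects of this product, and the excerpt already records $R \leqslant R_1 \times_T R_2$. To conclude that $R = R_1 \times_T R_2$, it therefore suffices to show that for each two-element subset $\{i,j\} \subseteq I$, the $\{i,j\}$-images of $R$ and $R_1 \times_T R_2$ in the corresponding two-fold product agree.

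There are six such pairs. For the ``pure'' pairs $\{1,3\}$ and $\{2,4\}$, the images of $R$ are $R_1$ in $X \times X$ and $R_2$ in $Y \times Y$ by definition, and by the very construction of the transpose relation, the projections of $R_1 \times_T R_2$ onto these pairs of coordinates also have image $R_1$ and $R_2$ respectively. For the remaining four pairs $\{1,2\}, \{3,4\}, \{1,4\}, \{2,3\}$, I would exploit the reflexivity of $R$: it provides a section $\sigma \colon X \times Y \rightarrow R_0$ with $r \sigma = \Delta_{X \times Y}$, whose projections to $X \times X$ and $Y \times Y$ are the diagonals and hence exhibit $R_1$ and $R_2$ as reflexive relations on $X$ and $Y$. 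Composing $\sigma$ with the relevant projection $(X \times Y)^2 \rightarrow X \times Y$ shows directly that the $\{1,2\}$- and $\{3,4\}$-images of $R$ are the whole of $X \times Y$, and an analogous argument using the mixed-coordinate projections shows that the $\{1,4\}$- and $\{2,3\}$-images are the whole of $X \times Y$ and $Y \times X$, respectively. Exactly the same computation, using reflexivity of $R_1$ and $R_2$ in place of reflexivity of $R$, shows that the four off-diagonal two-fold images of $R_1 \times_T R_2$ are also full.

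Once all six two-fold images are seen to match, Proposition~\ref{prop-J-fold-subobject-decompositions} (equivalently, the definition of having 2-fold subobject decompositions) forces $R = R_1 \times_T R_2$, as required. The main technical subtlety I anticipate is the careful treatment of the mixed pairs $\{1,4\}$ and $\{2,3\}$ for $R_1 \times_T R_2$: the relevant projection is computed through the transpose isomorphism $(X \times X) \times (Y \times Y) \cong (X \times Y)^2$ used in the definition of the transpose relation, and one has to verify that its composite with the mono $R_0 \times R_0' \rightarrow (X \times Y)^2$ descends to a regular epimorphism onto $X \times Y$ (respectively $Y \times X$). This is where reflexivity of $R_1$ and $R_2$ enters, via the sections they induce on the coordinate factors.
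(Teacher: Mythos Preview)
Your proposal is correct and follows exactly the paper's approach: the paper's proof is the single sentence ``both $R$ and $R_1 \times_T R_2$ have the same two-fold projections, when viewed as subobjects of $X \times Y \times X \times Y$,'' and your argument simply spells out the six pairwise checks (correctly using reflexivity for the four ``mixed'' pairs) that the paper leaves implicit.
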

\begin{proof}
	For any relation $R$ on a product $X \times Y$, its easy to see that both $R$ and $R_1 \times_T R_2$ have the same two-fold projections, when viewed as subobjects of $X \times Y \times X \times Y$.  
\end{proof}
In  Theorem~\ref{thm-main-characterization}, we will see that any regular majority category has finite 2-fold subobject decompositions. This is then the categorical analogue of the lattice-theoretic double-projection theorem of Bergman mentioned in the introduction. As we will see in the next section, there are no finitary varieties which have countable 2-subobject decompositions, however, there are infinitary varieties which do. 
\subsection{Infinite subobject decompositions}

\begin{proposition} 
	The only finitary varieties of algebras $\V$ which have countable $2$-fold subobject decompositions are trivial, i.e., each algebra in $\V$ has at most $1$ element. 
\end{proposition}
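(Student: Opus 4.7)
The plan is to prove the contrapositive: assuming $\V$ is a non-trivial finitary variety, I would exhibit a countable product of algebras in $\V$ together with two distinct subalgebras having identical two-fold projections. Since $\V$ is non-trivial, pick some $A \in \V$ containing two distinct elements $a \neq b$, form the countable power $A^{\N}$, and compare $A^{\N}$ itself against the subset
\[
R = \{s \in A^{\N} \mid s \text{ is eventually constant}\}.
\]

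The first and main step is to verify that $R$ is a subalgebra of $A^{\N}$, and here the finitarity of the signature does all the work. Given $s_1, \ldots, s_k \in R$ with eventual values $c_1, \ldots, c_k$, there is a single threshold $N$ past which all of the $s_i$ have simultaneously stabilized (this uses crucially that there are only finitely many of them), so for any $k$-ary term operation $t$ the coordinatewise application $t^{A^{\N}}(s_1, \ldots, s_k)$ equals the constant $t^A(c_1, \ldots, c_k)$ past $N$, and hence lies in $R$. This argument collapses in the infinitary setting, which fits the paper's announcement that infinitary varieties \emph{can} admit countable $2$-fold subobject decompositions.

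The next step is to check that $R$ and $A^{\N}$ have the same two-fold projections. Given distinct $i, j \in \N$ and any $(a_1, a_2) \in A \times A$, the sequence taking value $a_1$ at coordinate $i$ and $a_2$ at every other coordinate is eventually equal to $a_2$, hence lies in $R$, and visibly projects to $(a_1, a_2)$ under $\pi_{\{i,j\}}$. Thus the projection of $R$ onto every pair of coordinates is all of $A \times A$, matching the projection of $A^{\N}$.

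Finally, to separate $R$ from $A^{\N}$ as subalgebras, the alternating sequence $(a, b, a, b, \ldots)$ is plainly not eventually constant and so lies in $A^{\N} \setminus R$. This produces two distinct subalgebras of the countable product $A^{\N}$ with identical two-fold images, contradicting the countable $2$-fold subobject decomposition hypothesis; so $\V$ must be trivial. I do not expect any genuine obstacle — the construction is wholly elementary — but the one point that requires care is making explicit exactly where finitarity is used, since the entire gap between the finitary and infinitary cases is concentrated in the closure verification for $R$.
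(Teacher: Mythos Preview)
Your argument is correct and is in fact more transparent than the paper's. The paper proves the same contrapositive but takes a more elaborate route: it embeds the free algebra $F_{\V}(\N)$ into the product $\prod_{n\in\N} F_{\V}(\{1,\dots,n\})$ via the ``truncation'' maps $f_n$, invokes the pullback characterization of $\mathcal{J}$-fold subobject decompositions (Proposition~\ref{prop-J-fold-subobject-decompositions}), and deduces the existence of an element $x\in F_{\V}(\N)$ whose image under every $\overline{f_n}$ is the generator $n$; writing $x=t(a_1,\dots,a_k)$ and comparing $\overline{f_m}(x)$ with $\overline{f_{m+1}}(x)$ for $m=\max a_i$ then yields the contradiction. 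Your construction avoids both free algebras and the pullback lemma by exhibiting the two witnessing subalgebras directly, and the role of finitarity is the same in both proofs: for you it guarantees closure of the eventually-constant sequences, for the paper it guarantees that a term $t$ involves only finitely many generators. The paper's approach has the mild advantage of being phrased entirely in terms of the free objects of $\V$, which pinpoints the obstruction as a statement about a single canonical subalgebra; your approach has the advantage of being shorter and of making the parallel with the infinitary $I$-complete lattice example more visible, since in both cases one is asking whether a cardinality-bounded subalgebra of a power can be separated from the full power by pairs of coordinates.
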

\begin{proof}
	Suppose that $\V$ is a finitary variety which has countable $2$-fold subobject decompositions. Consider the set theoretic maps:
	\[
	f_n: \N \rightarrow \ord{n},  \quad x \longmapsto \begin{cases}
	x & x \leqslant n \\
	n & x > n
	\end{cases}
	\]
	Let $F = F_{\V}(\N)$ and $F_n = F_{\V}(\ord{n})$, then each $f_n$ induces a homomorphism $\overline{f_n}: F \rightarrow F_n$ via the free algebra in $\V$. Now let $f$ be the induced homomorphism into the product of the $F_i's$
	\[
	\xymatrix{
		F \ar@{..>}[rr]^f \ar[rrd]_{\overline{f_n}} &  &\prod\limits_{n \in \N} F_n \ar[d]^{\pi_n}\\
		&& F_n  
	}
	\]
	Then $f$ is a monomorphism. Let $F_{i,j}$ be the two-fold image of $F$ in $F_i \times F_j$, and consider the image factorization:
	\[
	F \xrightarrow{e_{i,j}} F_{i,j} \xrightarrow{f_{i,j}} F_i \times F_j.
	\]
	where  $f_{i,j}$ is the canonical inclusion, and $e_{i,j}$ the canonical projection. 
	Let $g_n:F_1 \rightarrow F_n$ be the homomorphism sending $1$ to $n$, and let $g = (g_n)_{n \in N}$. Now, for any $i,j \in \N$ we have that $(i,j) \in F_{i,j}$ since if $i \leqslant j$ then $f_i(j) = i$ and $f_j(j) =j$. Consider the homomorphism $g_{i,j}:F_1 \rightarrow F_{i,j}$ sending $1$ to $(i,j)$, this gives the following commutative diagram:
	\[
	\xymatrix{
		F_1 \ar[d]_g \ar[rrr]^{(g_{i,j})_{i,j \in \N}} & & &\prod\limits_{i,j \in \N} F_{i,j} \ar[d]^-{\prod\limits_{i,j \in \N} f_{i,j}} \\
		\prod\limits_{n\in \N} F_n \ar[rrr]_-{(\pi_i,\pi_j)_{i,j \in \N}} & & & \prod\limits_{i,j \in \N} F_i \times F_j
	}
	\]
	Now, by Proposition~\ref{prop-J-fold-subobject-decompositions},  the square:
	\[
	\xymatrix{
		F \ar[d]_f \ar[rrr]^{(e_{i,j})_{i,j \in \N}} & & &\prod\limits_{i,j \in \N} F_{i,j} \ar[d]^-{\prod\limits_{i,j \in \N} f_{i,j}} \\
		\prod\limits_{n\in \N} F_n \ar[rrr]_-{(\pi_i,\pi_j)_{i,j \in \N}} & & & \prod\limits_{i,j \in \N} F_i \times F_j
	}
	\]
	is a pullback. Therefore, there exists a morphism $F_1 \rightarrow F$, making the relevant triangle commute. This amounts to the existence of an element $x \in F$ such that $\overline{f_n}(x) = n$ for any $n \in \N$. Since $x$ is an element of $F$ it follows that $x = t(a_1,a_2,....,a_k)$ where $t$ is a $k$-ary term, and $a_1,a_2,...,a_k \in \N$. Now, let $m = \max \{a_1,a_2,...,a_k\}$, then it follows that 
	\[
	m = \overline{f_m}(t(a_1,a_2,....,a_k) = t(f_m(a_1),f_m(a_2),....,f_m(a_k)) =  t(a_1,a_2,....,a_k),
	\]
	but also we have
	\[
	m + 1 = \overline{f_{m+1}}(t(a_1,a_2,....,a_k) = t(f_{m+1}(a_1),f_{m+1}(a_2),....,f_{m+1}(a_k)) =  t(a_1,a_2,....,a_k),
	\]
	so that in $F_{m+1} \models m = m + 1$.  This implies that every algebra in $\V$ has at most one element.
\end{proof}

In the above proof it is crucial that $\V$ be finitary, as the finiteness of $t$ allows us to select the maximum of $a_1,a_2,...,a_k$.  In what follows, we will see that there can be infinitary varieties with 2-fold subobject decompositions over any set $I$.   

Recall that if $I$ is an arbitrary set, then an $I$-complete lattice $L$ is one in which any family $(x_i)_{i \in I}$ has a meet and a join. A homomorphism $f: L \rightarrow M$ of $I$-complete lattices is a function which preserves joins and meets of families indexed by $I$. ln what follows we shall denote the category of $I$-complete lattices by $\Lat_{I}$. 
\begin{proposition}
	The category $\Lat_{I}$ of $I$-complete lattices has 2-fold subobject decompositions over $I$. 
\end{proposition}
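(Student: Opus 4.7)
The plan is to exploit the infinitary lattice operations of $I$-complete lattices to build an ``$I$-ary median'' mimicking the usual lattice median $m(x,y,z) = (x \wedge y) \vee (x \wedge z) \vee (y \wedge z)$. Fix a family $(L_i)_{i \in I}$ of $I$-complete lattices and two subobjects $S, T \leqslant \prod_{i \in I} L_i$ with $S_{\{i,j\}} = T_{\{i,j\}}$ for every two-element subset $\{i,j\} \subseteq I$. Since $\Lat_I$ is a (large-arity) equational variety, subobjects may be identified with subsets of $\prod_{i \in I} L_i$ closed under the $I$-indexed meets and joins inherited from the product; by symmetry it therefore suffices to show that every element $s = (s_i)_{i \in I} \in S$ also belongs to $T$.

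Given such an $s$, the hypothesis on two-fold images supplies, for each pair of distinct indices $i, j \in I$, an element $t^{(i,j)} \in T$ whose $i$-th coordinate is $s_i$ and whose $j$-th coordinate is $s_j$. The key construction is the infinitary expression
\[
r = \bigwedge_{i \in I} \bigvee_{j \in I,\ j \neq i} t^{(i,j)}
\]
formed inside $\prod_{i \in I} L_i$. Because each $t^{(i,j)}$ lies in $T$ and $T$ is closed under $I$-indexed meets and joins, we get $r \in T$ for free.

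It then remains to verify coordinatewise that $r = s$. For each fixed $k \in I$, the inner join indexed by $i = k$ equals $\bigvee_{j \neq k} s_k = s_k$, while for each $i \neq k$ the inner join contains the term $t^{(i,k)}_k = s_k$ and is therefore $\geqslant s_k$; taking the outer meet across $i$ collapses everything to $r_k = s_k$. Since $s \in S$ was arbitrary and the argument is symmetric in $S$ and $T$, this yields $S = T$. The main obstacle is identifying the correct infinitary formula: it must act as an $I$-ary generalisation of the ternary majority term, and it is precisely the availability of such an expression — unavailable in any finitary variety, as the previous proposition shows — that makes two-fold subobject decomposition possible here. Once the formula is in hand, the computation is a short coordinatewise check.
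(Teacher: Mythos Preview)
Your proof is correct and follows essentially the same strategy as the paper: both construct the desired element explicitly as an $I$-ary lattice expression in the two-fold witnesses and then verify equality coordinatewise. The only difference is that you use the dual formula $\bigwedge_{i}\bigvee_{j} t^{(i,j)}$ (a meet of joins) whereas the paper uses $\bigvee_{j}\bigwedge_{i} s_{i,j}$ (a join of meets), mirroring the two equivalent forms of the lattice median.
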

\begin{proof}
	Suppose that $S \subseteq \prod\limits_{i \in I} L_i  = L$ is any $I$-complete sublattice of a product of $I$-complete lattices, and suppose that $\pi_k: \prod\limits_{i \in I} L_i  \rightarrow L_k$ are the canonical product projections. Then to show that $S$ satisfies (ii) of Proposition~\ref{prop-J-fold-subobject-decompositions} where $\mathcal{J}$ is the set of all $2$-element subsets of $I$,  amounts to showing that $S$ has the following property: for any $x \in L$, if for any $i, j \in I$ there exists $s \in S$ such that $\pi_i(x) =\pi_i(s)$ and $\pi_j(x) = \pi_j(s)$ -- (*),  then $x \in S$.  To that end, suppose that $x \in L$ satisfies (*), and let $s_{i,j} \in S$ be elements of $S$ with $\pi_i(s_{i,j}) = x_i$ and $\pi_j(s_{i,j}) = x_j$. Define the elements $s_j$ of $S$ as follows:
	\[
	s_j = \bigwedge \limits_{i \in I} s_{i,j}
	\]
	then  for any $i,j \in I$ we have $\pi_i(s_j) \leqslant \pi_i(s_i) = x_i$, since
	\[
	s_j = \bigwedge \limits_{i \in I} s_{i,j} \leqslant s_{i,j}\implies \pi_i(s_j) \leqslant \pi_i(s_{i,j}) = x_i = \pi_i(s_i)
	\]
	This implies that
	\[
	x = \bigvee\limits_{i \in I} s_i,
	\]
	so that $x \in S$.  
\end{proof}
\begin{proposition}
	If $I,J$ are infinite sets and $|I| < |J|$, then $\Lat_{I}$ does not have 2-fold subobject decompositions of size $J$. 
\end{proposition}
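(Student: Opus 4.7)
The plan is to exhibit an explicit counterexample. Take $L_j = \mathbf{2} = \{0,1\}$ for every $j \in J$, so that the product $L = \prod_{j \in J} L_j$ is canonically isomorphic to the powerset lattice $\mathcal{P}(J)$ with pointwise meets and joins (which is trivially $I$-complete, since $\mathbf{2}$ is). Inside $L$, I would consider the ambient sublattice $T = L$ together with the ``small-or-cosmall'' sublattice
\[
S = \{A \subseteq J : |A| \leqslant |I| \text{ or } |J \setminus A| \leqslant |I|\},
\]
aiming to show that $S$ and $T$ are distinct $I$-complete sublattices of $L$ having identical two-fold images in every $L_i \times L_j$, which directly contradicts 2-fold subobject decompositions over $J$.

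First I would verify $I$-completeness of $S$ by a case analysis on a family $(A_\alpha)_{\alpha \in I}$ of members of $S$: partition the index set into those $\alpha$ for which $A_\alpha$ is small (size $\leqslant |I|$) and those for which $A_\alpha$ is cosmall. Using that $|I| \cdot |I| = |I|$, a union of at most $|I|$ small sets is again small; and if some $A_{\alpha_0}$ is cosmall, then $J \setminus \bigcup_\alpha A_\alpha \subseteq J \setminus A_{\alpha_0}$ shows the union is cosmall. The dual argument handles intersections: small whenever some member is small, cosmall whenever all members are cosmall.

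Next I would show that $S \subsetneq T$. Since $|J|$ is infinite, $J$ admits a partition $J = B \sqcup C$ with $|B| = |C| = |J| > |I|$, and such a $B$ lies in $T \setminus S$. To conclude, I would check that $S$ and $T$ have the same image in $L_i \times L_j$ for every pair $\{i,j\} \subseteq J$ with $i \neq j$: the sets $\emptyset, \{i\}, \{j\}, J$ all belong to $S$ (the first three being small, the last cosmall), and their projections exhaust the four elements of $\mathbf{2} \times \mathbf{2}$, matching the projections of $T$. The main obstacle is the closure verification for $S$; it is elementary but requires careful tracking of the mixed case in which a family contains both small and cosmall members, where the intersection is controlled by a small member and the union by a cosmall one.
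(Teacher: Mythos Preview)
Your proof is correct and follows essentially the same strategy as the paper: exhibit a proper $I$-complete sublattice of $\prod_{j\in J}\mathbf{2}$ whose two-fold projections are all of $\mathbf{2}\times\mathbf{2}$. The paper's construction is slightly leaner, taking $S$ to consist of only the small subsets $\{A\subseteq J : |A|\leqslant |I|\}$; this set is already closed under arbitrary meets and under $I$-indexed joins (by the same $|I|\cdot|I|=|I|$ argument you use), and the top element of $\mathcal{P}(J)$ witnesses $S\neq L$ directly---so the cosmall sets and the mixed-case analysis you flag as the main obstacle can simply be dropped.
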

\begin{proof}
	Consider the subset $S$ of $\prod\limits_{j \in J} \mathbf{2}$ consisting of all elements $s \in \prod\limits_{j \in J} \mathbf{2}$ such that 
	\[
	|\{j \in J\mid \pi_j(s) = 1\}| \leqslant |I| 
	\]
	suppose that $(s_i)_{i \in I}$ is a collection of elements of $S$ and let $s = \bigvee\limits_{i \in I} s_i$.  Then it is easy to see that
	\[
	\{j \in J\mid \pi_j(s) = 1\} = \bigcup\limits_{i \in I} \{j \in J\mid \pi_j(s_i) = 1\}
	\]
	But then since $I$ is infinite, it follows that $|I \times I| = |I|$. Therefore we have:
	\[
	|\{j \in J\mid \pi_j(s) = 1\}| = |\bigcup\limits_{i \in I} \{j \in J\mid \pi_j(s_i) = 1\}| \leqslant |\bigsqcup\limits_{i \in I} I| = |I \times I| = |I|
	\]
	so that $s \in S$. Thus, $S$ is a sublattice of $\prod\limits_{j \in J} \mathbf{2}$. Moreover, $S$ has the same 2-fold projections as $\prod\limits_{j \in J} \mathbf{2}$, but is not equal to $\prod\limits_{j \in J} \mathbf{2}$.  As, for example, the top element of $ \prod\limits_{j \in J} \mathbf{2}$ is not contained in $S$. 
\end{proof}
Interestingly, many duals of geometric categories such as $\Ord^{\op}$, $\Grph^{\op}, G-\Set^{\op}$ have 2-fold subobject decompositions over any set. In general, given a coregular category $\C$, to show that $\C^{\op}$ has two-fold subobject decompositions over $I$, we have to show that if $r:\bigsqcup\limits_{i \in I} X_i \rightarrow R$ and $t:\bigsqcup\limits_{i \in I} X_i \rightarrow T$ are any two epimorphisms in $\C$ with the same two-fold coimages, then $r$ is isomorphic to $t$ in the slice category $(\bigsqcup \limits_{i \in I} X_i \downarrow \C)$. This amounts to showing that if for any $i,j \in I$ we have the following commutative diagram of solid arrows
\begin{figure}[h]
	\[
	\xymatrix{
		& R_{i,j} \ar[dd]^(.35){\phi_{i,j}}|\hole \ar[rr]^{r_{i,j}}& & R \ar@{..>}[dd]^(.35){\phi}\\ 
		X_i \sqcup X_j \ar[rr]\ar[ur]^{\alpha_{i,j}} \ar[dr]_{\beta_{i,j}} & & \bigsqcup\limits_{i \in I} X_i \ar[ur]_r \ar[dr]^t\\
		& T_{i,j} \ar[rr]_{t_{i,j}} & & T
	}
	\]
\end{figure}
where $\phi_{i,j}$ are isomorphisms, $r_{i,j}\alpha_{i,j}$ and  $t_{i,j}\beta_{i,j}$ are the canonical coimage factorizations, then the dotted arrow $\phi$ exists, is an isomorphism, and makes the diagram above commute.  For $\C = \Set$, we define the $\phi$ as follows: if $x \in R$, then select an element $y \in r^{-1}(x)$ and set $\phi(x) = t(y)$. To see that this is well-defined, suppose that $y,y' \in r^{-1}(x)$ then there exists $i,j \in I$ such that $y\in X_i$ and $y' \in X_j$. Now, $\alpha_{i,j}(y) = \alpha_{i,j}(y')$ since $r_{i,j}$ is a monomorphism, and therefore $\phi_{i,j}\alpha_{i,j}(y) = \beta_{i,j}(y) = \beta_{i,j}(y')  = \phi_{i,j}\alpha_{i,j}(y')$ so that $t_{i,j}\beta_{i,j}(y) = t_{i,j}\beta_{i,j}(y')$ which implies $t(y) = t(y')$.  In each of the coregular categories $ G-\Set, \Ord, \Grph$ it is easy to see that the map defined above, is actually and isomorphism in each category. This shows, in particular, that the categories mentioned above are majority categories by Theorem~\ref{thm-relationDecompositionImpiesMajority}. 

 Perhaps it is surprising that the category $\Top^{\op}$ does not have 2-fold subobject decompositions over arbitrary sets. Indeed, it does not even have countable two-fold subobject decompositions: consider $\Q$ together with the subspace topology induced by $\mathbb{R}$. Define the continuous maps $f_a:\Q\rightarrow \Q^2$ by $f_a(x) = (a,x)$. The induced continuous map $f$ in the diagram
 \[
 \xymatrix{
 	\bigsqcup\limits_{a \in \Q} \Q \ar@{..>}[rr]^f & & \Q^2 \\
 	\Q \ar[u]^-{\iota_a} \ar[urr]_{f_a}
 }
 \]
 is an epimorphism. Moreover, $f$ has the same two-fold co-images as the identity on $\bigsqcup\limits_{a \in \Q} \Q$, so that if $\Top^{\op}$ had two-fold subobject decompositions, then we would have $\bigsqcup\limits_{a \in \Q} \Q \simeq \Q \times \Q$ --- which is a contradiction. 
\begin{proposition}
	$\Top^{\op}$ has finite two-fold subobject decompositions. 
\end{proposition}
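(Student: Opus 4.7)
The plan is to follow the template described in the paragraph preceding the proposition: produce a set-theoretic bijection $\phi:R\to T$ out of the pairwise compatibility data, and then upgrade $\phi$ to a homeomorphism using the finiteness of $I=\{1,\dots,n\}$ in an essential way. For the set-theoretic step, nothing in the $\Set$ construction recalled just above uses features of $\Set$ that fail in $\Top$, so it applies verbatim to produce $\phi$ with $\phi r = t$ and $\phi|_{R_{i,j}} = \phi_{i,j}$; in particular $\phi$ restricts to a homeomorphism on each two-fold coimage $R_{i,j}$.

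To promote $\phi$ to a global homeomorphism I would identify $R$ with $T$ along $\phi$; then the two topologies $\sigma_R,\sigma_T$ placed on the common underlying set induce, by hypothesis, the same subspace topology on each $R_{i,j}=R_i\cup R_j$, and the task reduces to the purely topological assertion: \emph{if two topologies $\sigma_1,\sigma_2$ on a set $R$ agree on the subspaces $R_i\cup R_j$ of a finite cover $R=R_1\cup\cdots\cup R_n$, then $\sigma_1=\sigma_2$.}

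For this assertion, by symmetry it is enough to show $\sigma_1\subseteq\sigma_2$. Given $U\in\sigma_1$, pick for each pair $\{i,j\}$ some $V_{i,j}\in\sigma_2$ with $V_{i,j}\cap(R_i\cup R_j)=U\cap(R_i\cup R_j)$. For each index $k$ form the \emph{finite} intersection
\[
V_k \;:=\; \bigcap_{j\neq k} V_{k,j},
\]
which lies in $\sigma_2$ because the intersection is finite. Routine checks give $V_k\cap R_k = U\cap R_k$ (each factor already cuts $R_k$ down to $U\cap R_k$) and $V_l\cap R_k\subseteq V_{l,k}\cap R_k = U\cap R_k$ for $l\neq k$, so $V := \bigcup_k V_k\in\sigma_2$ satisfies $V\cap R_k = U\cap R_k$ for every $k$; since the $R_k$'s cover $R$ this forces $V=U$, yielding $U\in\sigma_2$. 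Applying this lemma to $R=r(X_1)\cup\cdots\cup r(X_n)$ gives $\sigma_R=\sigma_T$, i.e.\ $\phi$ is a homeomorphism.

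The main obstacle is precisely this lemma: because the cover $\{R_{i,j}\}$ is a priori neither open nor closed in $R$, ordinary gluing of continuous maps does not immediately extend continuity of $\phi$ from each $R_{i,j}$ to the whole of $R$. The finite-intersection construction above circumvents this, and the use of finite $n$ is essential --- in line with the failure of the analogous statement for countable $n$ exhibited by the $\Q^2$-example given just before the proposition.
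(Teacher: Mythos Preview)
Your proposal is correct and follows essentially the same approach as the paper's proof: both arguments hinge on the same finite-intersection trick, defining $V_k=\bigcap_j V_{k,j}$ from open sets $V_{k,j}$ witnessing agreement on the pairwise pieces, and then checking that $\bigcup_k V_k$ recovers the original open set. Your presentation abstracts the core step into a clean standalone lemma about two topologies agreeing on the members of a finite cover, whereas the paper runs the same computation directly with $\phi$ and the coimages $T_{i,j}$, but the substance is identical.
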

Recall that regular monomorphisms in $\Top$ are precisely the embeddings of spaces. 
\begin{proof}
	We will show that in the above figure, that $\phi$ is a homeomorphism, provided that $I$ is finite. We first show that $\phi$ preserves open sets:  let $U \subseteq R$ be any open set in $R$, then for any $i,j$ we have that $R_{i,j} \cap U$ is open in $R_{i,j}$, which implies that $\phi_{i,j}(U) $ is open in $T_{i,j}$ since each $\phi_{i,j}$ is a homeomorphism. Therefore, there exists an open set $V_{i,j} \subseteq T$ such that $\phi_{i,j}(R_{i,j} \cap U)  = T_{i,j} \cap V_{i,j}$, and hence we have: 
	\[
	T_{i,j} \cap \phi(U) = \phi(R_{i,j} \cap U) = \phi_{i,j}(R_{i,j} \cap U)  = T_{i,j} \cap V_{i,j}.
	\]
	Let $V_i = \bigcap\limits_{j \in I} V_{i,j}$ and $T_i = T_{i,i}$. Then we have 
	\[
	\bigcap\limits_{j \in J} (T_{i,j} \cap \phi(U)) = \bigcap\limits_{j \in J} (T_{i,j} \cap V_{i,j} ) \implies T_i \cap \phi(U) = T_i \cap V_i
	\]
	then we will show that $\bigcup\limits_{i \in I} V_i = \phi(U)$. For the direction $\phi(U) \subseteq \bigcup\limits_{i \in I} V_i$: let $x \in \phi(U)$ then there exists $j \in I$ such that $x \in T_{j}$, so that
	\[
	x \in \phi(U) \cap T_{j} \implies  x \in T_j\cap V_j \implies x \in \bigcup\limits_{i \in I} V_i 
	\]
	For the reverse inclusion $\bigcup\limits_{i \in I} V_i \subseteq \phi(U)$: suppose that $x \in V_i$ for some $i \in I$. Then there exists $j \in I$ such that $x \in T_{i,j}$ and therefore,
	\[
	x \in T_{i,j} \cap V_i \implies x \in T_{i,j} \cap V_{i,j} \implies x \in T_{i,j} \cap\phi(U) \implies x \in \phi(U).
	\]
\end{proof}
\section{The Pairwise Chinese Remainder Theorem in a category}

Let $\C$ be a regular category and $X$ an object of $\C$. If $\theta$ is an equivalence relation on $X$ and $a,b:S \rightarrow X$ morphisms in $\C$, then we will write $a \equiv b \mod \theta$ if $(a,b) \eot{S} \theta$ in what follows. Given an object $X$ of a category $\C$, morphisms $a_1,a_2,...,a_m:S \rightarrow X$ and equivalence relations $\theta_1,\theta_2,...,\theta_m$, we will be concerned with solving the system of congruence equations:
\begin{align*}
x \equiv& a_1 \mod \theta_1, \\
x \equiv& a_2 \mod \theta_2, \\ 
& \vdots  \tag{$*$}\\
x \equiv& a_m \mod \theta_m.
\end{align*}
\begin{definition}
	An approximate solution to the system above consists of a morphism $a:Q \rightarrow X$ (the approximate solution), together with a regular epimorphism $\alpha: Q \rightarrow S$ (the approximation of $a$), such that for any $i \in \{1,2,...,m\}$ we have
	\[
	a \equiv a_i \alpha \mod \theta_i.
	\]
	If such an $a:Q \rightarrow X$ and $\alpha:Q \rightarrow S$ exist, then the above system ($*$) is said to be approximately solvable. The above system ($*$) is said to be approximately pairwise solvable, if for any $i,j \in \{1,2,...,m\}$ the system 
	\begin{align*}
	x\equiv& a_i \mod \theta_i, \\
	x \equiv& a_j \mod \theta_j 
	\end{align*}
	is approximately solvable.
\end{definition}
\begin{remark}
	The above notion is similar to the notion of an approximate operation in the sense of \cite{BJ08}, in how it compares with the ordinary notion of solution to a system of equations. 
\end{remark}
\begin{definition}[PCRT] \label{def-PCRT}
	Let $X$ be an object of a regular category $\C$, then $X$ is said to satisfy the Pairwise Chinese Remainder Theorem, if for any morphisms $a_1,a_2,...,a_m:S \rightarrow X$, and any effective equivalence relations $\theta_1,\theta_2,...,\theta_m$, if the system 
	\begin{align*}
	x \equiv& a_1 \mod \theta_1, \\
	x \equiv& a_2 \mod \theta_2, \\ 
	& \vdots \\
	x \equiv& a_m \mod \theta_m
	\end{align*}
	is approximately pairwise solvable, then it is approximately solvable. If every object of $\C$ satisfies the PCRT, then we say that $\C$ satisfies the PCRT, or that the PCRT holds in $\C$.
\end{definition}
\begin{lemma} \label{lem-regular-category}
	If $\alpha_i': Q_i \rightarrow A$ and $\beta_i':Q_i \rightarrow B$ are regular epimorphisms making the diagram
	\[
	\xymatrix{
		Q_i \ar[r]^{\beta_i'} \ar[d]_{\alpha_i'} & B\ar[d]^{b_i} \\
		A \ar[r]_{a_i} & C_i 
	}
	\]
	commute, then there exist regular epimorphisms $\alpha:Q \rightarrow A$ and $\beta_i:Q \rightarrow B$ making the diagram 
	\[
	\xymatrix{
		Q \ar[r]^{\beta_i} \ar[d]_{\alpha} & B\ar[d]^{b_i} \\
		A \ar[r]_{a_i} & C_i 
	}
	\]
	commute for any $i \in \{1,2,...,n\}$.
\end{lemma}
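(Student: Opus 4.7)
My plan is to proceed by induction on $n$, amalgamating the given cospans $\alpha_i':Q_i \to A$ one at a time via binary pullbacks over $A$. The base case $n=1$ is immediate: I take $Q = Q_1$, $\alpha = \alpha_1'$, and $\beta_1 = \beta_1'$, and no construction is needed.

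For the inductive step, I suppose we have already produced a regular epimorphism $\alpha:Q \to A$ together with regular epimorphisms $\beta_1,\dots,\beta_n:Q \to B$ making each of the first $n$ squares commute, and I form the pullback
\[
\xymatrix{
P \ar[r]^-{q} \ar[d]_{p} & Q_{n+1} \ar[d]^{\alpha_{n+1}'} \\
Q \ar[r]_-{\alpha} & A
}
\]
in $\C$. Since both $\alpha$ and $\alpha_{n+1}'$ are regular epimorphisms, pullback stability of regular epimorphisms guarantees that $p$ and $q$ are regular epimorphisms as well. I then take $P$ as the new apex, with $\alpha^{\mathrm{new}} := \alpha p = \alpha_{n+1}' q : P \to A$, $\beta_i^{\mathrm{new}} := \beta_i p$ for $i \leqslant n$, and $\beta_{n+1}^{\mathrm{new}} := \beta_{n+1}' q$. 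Each of these is a composite of two regular epimorphisms, hence again a regular epimorphism (regular epimorphisms being closed under composition in any regular category). Commutativity of every new square is then immediate: for $i \leqslant n$ one has $a_i \alpha^{\mathrm{new}} = a_i \alpha p = b_i \beta_i p = b_i \beta_i^{\mathrm{new}}$, while $a_{n+1}\alpha^{\mathrm{new}} = a_{n+1}\alpha_{n+1}' q = b_{n+1}\beta_{n+1}' q = b_{n+1}\beta_{n+1}^{\mathrm{new}}$.

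Alternatively, one can bypass the induction entirely by taking $Q$ to be the wide pullback $Q_1 \times_A Q_2 \times_A \cdots \times_A Q_n$, which exists because $\C$ has finite limits. The projections $\pi_i : Q \to Q_i$ will be regular epimorphisms by iterated pullback stability, and then one sets $\alpha := \alpha_i'\pi_i$ (well-defined by the universal property) and $\beta_i := \beta_i'\pi_i$; commutativity and regularity of each composite will follow exactly as above. Either way, the only properties of $\C$ that enter the argument are pullback stability of regular epimorphisms and their closure under composition, both of which are standard in any regular category, and I anticipate no serious obstacle beyond the diagrammatic bookkeeping.
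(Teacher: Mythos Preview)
Your proof is correct and essentially the same as the paper's: the paper takes $Q$ to be the limit of the diagram $Q_i \xrightarrow{\alpha_i'} A$ (i.e.\ the wide pullback you describe in your alternative), sets $\alpha = \alpha_i'p_i$ and $\beta_i = \beta_i'p_i$, and is done. Your inductive argument via iterated binary pullbacks simply builds this wide pullback one step at a time, so there is no substantive difference.
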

\begin{proof}
	Simply consider the limit of the diagram:
	\[
	Q_i \xrightarrow{\alpha_i'} A
	\]
	where $i$ ranges from $1$ to $n$. This produces a family of regular epimorphisms $p_i:Q \rightarrow Q_i$ making the diagram
	\[
	\xymatrix{
		Q \ar[d]_{p_i} \ar[dr]^{\alpha}\\
		Q_i \ar[r] _{\alpha'_i} & A 
	}
	\]
	commute, where $\alpha$ is any composite $\alpha_i'p_i$ where $i \in \{1,2,...,n\}$. Then defining $\beta_i = \beta'_i p_i$, it follows that $\alpha$ and $\beta_i$ satisfy the required properties. 
\end{proof}
\begin{lemma} \label{lem-PCRT-implies-subobject-decompositions}
	Let $\C$ be a regular category, then (i) $\implies$ (ii) where
	\begin{enumerate}[(i)]
		\item The PCRT holds in $\C$. 
		\item $\C$ has finite 2-fold subobject decompositions. 
	\end{enumerate}
\end{lemma}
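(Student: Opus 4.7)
The plan is to translate the subobject-decomposition requirement directly into an instance of the PCRT applied on the object $S_0$. By Proposition~\ref{prop-J-fold-subobject-decompositions}, finite 2-fold subobject decompositions amount to the following concrete property: for any monomorphism $s: S_0 \to A_1 \times \cdots \times A_n$ and any generalized element $x: T \to A_1 \times \cdots \times A_n$ whose two-fold projections $(\pi_i,\pi_j) x$ all factor through the respective two-fold images $s_{i,j}: S_{i,j,0} \to A_i \times A_j$, one has $x \eot{T} S$. So, assuming the PCRT in $\C$, it suffices to produce such a factorization under these hypotheses.

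First, I would unpack each factoring hypothesis through the regular epimorphism $e_{i,j}: S_0 \to S_{i,j,0}$ provided by the image factorization of $(\pi_i,\pi_j) s$. Pulling back the factoring map $T \to S_{i,j,0}$ along $e_{i,j}$ yields, for every pair $\{i,j\}$, a regular epimorphism $\alpha_{i,j}: Q_{i,j} \to T$ together with a morphism $\beta_{i,j}: Q_{i,j} \to S_0$ such that $\pi_l s \beta_{i,j} = \pi_l x \alpha_{i,j}$ for each $l \in \{i,j\}$. For each single index $i$, choosing some $j \neq i$ gives diagonal data $(\beta_i,\alpha_i) := (\beta_{i,j},\alpha_{i,j})$ satisfying $\pi_i s \beta_i = \pi_i x \alpha_i$; forming the joint pullback of these $\alpha_i$ over $T$ then produces a single regular epimorphism $\alpha: Q \to T$ with projections $p_i: Q \to Q_i$ satisfying $\alpha_i p_i = \alpha$ (cf.\ Lemma~\ref{lem-regular-category}), and setting $\bar\beta_i := \beta_i p_i$ gives morphisms $\bar\beta_i: Q \to S_0$ with $\pi_i s \bar\beta_i = \pi_i x \alpha$ for every $i$.

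Next, I would invoke the PCRT for the object $S_0$, the effective equivalence relations $\theta_i := \ker(\pi_i s)$, and the right-hand sides $\bar\beta_1,\dots,\bar\beta_n: Q \to S_0$. Approximate pairwise solvability is supplied by the off-diagonal data: for each pair $\{i,j\}$, the pullback of $\alpha_{i,j}$ and $\alpha$ yields an object $Q'_{i,j}$ with regular epis $\gamma_{i,j}: Q'_{i,j} \to Q$ and $\delta_{i,j}: Q'_{i,j} \to Q_{i,j}$ such that $\alpha_{i,j}\delta_{i,j} = \alpha\gamma_{i,j}$, and a short calculation with the defining equations shows that $\beta_{i,j}\delta_{i,j}: Q'_{i,j} \to S_0$ is congruent to $\bar\beta_i\gamma_{i,j}$ modulo $\theta_i$ and to $\bar\beta_j\gamma_{i,j}$ modulo $\theta_j$. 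The PCRT then produces a morphism $a: R \to S_0$ and a regular epimorphism $\rho: R \to Q$ satisfying $\pi_i s a = \pi_i s \bar\beta_i \rho = \pi_i x \alpha\rho$ for every $i$; comparing all projections forces $sa = x\alpha\rho$, and since $\alpha\rho: R \to T$ is a regular epi and $s$ is a monomorphism, Remark~\ref{rem-reg-eot} delivers $x \eot{T} S$, as required.

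The main difficulty I anticipate is the combinatorial bookkeeping of the various pullbacks and regular-epimorphism alignments: the diagonal approximations $\alpha_i$, the off-diagonal approximations $\alpha_{i,j}$, and the common "approximation" $\alpha$ underlying the PCRT must all be threaded over compatible regular epis of $T$. Once this alignment is handled via the limit-of-regular-epis construction of Lemma~\ref{lem-regular-category}, the translation of the hypothesis into approximate pairwise solvability and the extraction of the final factorization from the PCRT solution are essentially formal.
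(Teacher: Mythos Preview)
Your proposal is correct and follows essentially the same strategy as the paper: apply the PCRT on the subobject (your $S_0$, the paper's $B_0$) using the kernel congruences $\theta_i$ of its component projections, manufacture the right-hand sides and the pairwise approximate solutions by pulling back along the regular epis $e_{i,j}$ from the image factorizations, align all the approximating epis over a common object via Lemma~\ref{lem-regular-category}, and then read off the desired factorization from the PCRT solution using Remark~\ref{rem-reg-eot}. The only cosmetic difference is that the paper frames the task as showing $A\leqslant B$ for two subobjects with equal two-fold images, whereas you invoke the pullback formulation of Proposition~\ref{prop-J-fold-subobject-decompositions} and work with a single subobject $S$ and a generalized element $x$; the underlying argument is the same.
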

\begin{proof}
	Suppose that $C_1,C_2,....,C_r$ are any objects in $\C$, and let  $A,B$ be any subobjects of $C = C_1 \times C_2 \times \cdots \times C_r$, with representatives $a:A_0 \rightarrow C$ and $b:B_0 \rightarrow C$, and which have the same 2-fold images in $C_i \times C_j$. Let $\pi_i(a) = a_i$ and $\pi_i(b) = b_i$, then we will show that $A \leqslant B$. Consider the regular-epi mono factorizations of the morphisms $(a_i,a_j)$ and $(b_i,b_j)$ below: 
	\begin{align*}
	A_0 \xrightarrow{\alpha_{i,j}'} &R \xrightarrow{(r_i,r_j)} C_i \times C_j, \\ 
	B_0 \xrightarrow{\beta_{i,j}'} &T \xrightarrow{(t_i,t_j)} C_i \times C_j .
	\end{align*}
	Since $A$ and $B$ have the same two-fold images, there exists an isomorphism $\phi:R \rightarrow T$ such that $(t_i,t_j)\phi = (r_i,r_j)$. Now, we can pullback $\phi\alpha'_{i,j}$ along $\beta'_{i,j} $, and get two regular epimorphisms $\alpha''_{i,j}:Q_{i,j} \rightarrow A_0$ and $\beta''_{i,j}:Q_{i,j} \rightarrow B_0$ making the diagram 
	\[
	\xymatrix{
		Q_{i,j} \ar@{->>}[r]^{\beta''_{i,j}} \ar@{->>}[d]_{\alpha''_{i,j}} & B_0\ar[d]^-{(b_i, b_j)} \\
		A_0 \ar[r]_-{(a_i, a_j)} & C_i \times C_j
	}
	\]
	commute. Then by Lemma~\ref{lem-regular-category}, there exist regular epimorphisms $\alpha:Q \rightarrow  A_0$ and $\beta_{i,j}:Q \rightarrow B_0$ such that the diagram
	\[
	\xymatrix{
		Q \ar@{->>}[r]^{\beta_{i,j}} \ar@{->>}[d]_{\alpha} & B_0\ar[d]^-{(b_i, b_j)} \\
		A_0 \ar[r]_-{(a_i, a_j)} & C_i \times C_j
	}
	\]
	commutes for any $i,j \in \{1,2,...,r\}$. Now define $\beta_i = \beta_{i,i}$, and let $\theta_i$ be the kernel equivalence relation on $B_0$ defined by $b_i$. Then we have that
	\[
	\beta_{i,j}\equiv \beta_i \mod \theta_i \quad \text{and} \quad \beta_{i,j}\equiv \beta_j \mod \theta_j,
	\] 
	so that the system
	\[
	x \equiv \beta_i \mod \theta_i \tag{for $i = 1,2,...,r.$}
	\]
	is pairwise approximately solvable (the approximation in each  case is the identity on $Q$). 
	Therefore, by (i) there exists a regular epimorphism $\alpha':Q' \rightarrow Q$ and a morphism $\beta:Q' \rightarrow B_0$ such that
	\[
	\beta \equiv \beta_i \alpha' \mod \theta_i \tag{for $i = 1,2,...,r.$}
	\]
	This implies that 
	\begin{align*}
	b_i\beta_i\alpha' = b_i\beta \implies a_i\alpha \alpha' = \pi_i b\beta \implies \pi_i (a\alpha \alpha') = \pi_i (b\beta),
	\end{align*}
	for any $i \in \{1,2,...,r\}$, and therefore, $a\alpha \alpha' = b\beta$. Therefore the diagram of solid arrows 
	\[
	\xymatrix{
		Q'\ar@{->>}[d]_{\alpha\alpha'} \ar[r]^{\beta} & B_0 \ar[d]_b\\ 
		A_0 \ar[r]_{a} \ar@{..>}[ur] & C
	}
	\]
	commutes, and the dotted arrow exists since $\alpha\alpha'$ is a regular epimorphism and $b$ is a monomorphism. This shows that $A \leqslant B$, and similarly we may get $B \leqslant A$. 
\end{proof}

\begin{lemma} \label{lem-majority-selecting}
	Let $\C$ be a majority category with finite products, and $R \leqslant A_1 \times A_2 \times \cdots \times A_n$ any $n$-ary relation with $n \geqslant 3$. If 
	\[
	(x,a_2,a_3,...,a_n) \eot{S} R \quad \text{and} \quad (a_1,y,a_2,a_3,...,a_n) \eot{S} R \quad  \text{and} \quad (a_1,a_2,z,...,a_n) \eot{S} R,
	\]
	then 
	\[
	(a_1,a_2,a_3,...,a_n) \eot{S} R.
	\]
\end{lemma}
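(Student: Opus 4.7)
The plan is to reduce the $n$-ary statement directly to the ternary majority-selecting property by collapsing the tail of the product into a single factor. Using the canonical associativity isomorphism
\[
A_1 \times A_2 \times \cdots \times A_n \;\cong\; A_1 \times A_2 \times B, \qquad B := A_3 \times A_4 \times \cdots \times A_n,
\]
view $R$ as a ternary relation $R^{*} \leqslant A_1 \times A_2 \times B$. (For $n=3$ the factor $B$ is just $A_3$ and there is nothing to do beyond invoking Definition~\ref{def-majority-selecting} directly.)

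Next, introduce the two morphisms $b = (a_3, a_4, \ldots, a_n) \colon S \to B$ and $b' = (z, a_4, \ldots, a_n) \colon S \to B$ obtained from the universal property of the product. The three hypotheses, once read through the associativity isomorphism, take the form
\[
(x, a_2, b) \eot{S} R^{*}, \qquad (a_1, y, b) \eot{S} R^{*}, \qquad (a_1, a_2, b') \eot{S} R^{*}.
\]
The key observation is that each of these three tuples differs from the target tuple $(a_1, a_2, b)$ in exactly one of the three coordinates, so they fit the syntactic pattern of Definition~\ref{def-majority-selecting} with the roles of $x, y, z$ played by $a_1, a_2, b$ and of $x', y', z'$ played by $x, y, b'$.

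Since $\C$ is a majority category, the ternary relation $R^{*}$ is majority-selecting, so the three displayed memberships imply $(a_1, a_2, b) \eot{S} R^{*}$. Translating back across the associativity isomorphism yields $(a_1, a_2, a_3, \ldots, a_n) \eot{S} R$, as required. The argument is entirely formal; the only bookkeeping is to confirm that the associativity isomorphism identifies the pertinent morphisms $S \to A_1 \times \cdots \times A_n$ with $S \to A_1 \times A_2 \times B$, which is immediate from the universal property of the product, and so there is no genuine obstacle in the proof.
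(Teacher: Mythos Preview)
Your proof is correct and is precisely the paper's own argument: the paper's proof is a one-line remark that $R$ may be viewed as a ternary relation between $A_1$, $A_2$ and $A_3 \times \cdots \times A_n$, which is then majority-selecting by Definition~\ref{def-majority-selecting}. You have simply spelled out the associativity identification and the matching of coordinates in more detail.
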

\begin{proof}
	Follows by Definition~\ref{def-majority-selecting} from the fact that $R$ is a ternary relation between $A_1,A_2$ and $A_3 \times \cdots \times A_n$, which must be majority-selecting.
\end{proof}

\begin{lemma} \label{lem-majorityImpliesPCRT}
	If $\C$ is a regular majority category, then the Pairwise Chinese Remainder Theorem holds for $\C$.
\end{lemma}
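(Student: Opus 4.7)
The plan is to proceed by induction on the number $m$ of equations, with base case $m \leq 2$ being immediate from the hypothesis of approximate pairwise solvability. For the inductive step with $m \geq 3$, I will convert the problem into factoring a morphism $Q \rightarrow X^m$ through an explicit $m$-ary relation $R$ on $X^m$ that encodes ``approximate simultaneous solvability'', and then invoke Lemma~\ref{lem-majority-selecting}.

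To construct $R$, I use effectiveness of each $\theta_i$ to obtain coequalizers $p_i: X \rightarrow X/\theta_i$, and form the wide pullback
\[
P = X \times_{X/\theta_1} X \times_{X/\theta_2} X \times \cdots \times_{X/\theta_m} X,
\]
whose elements are tuples $(x, x_1, \ldots, x_m)$ with $x \, \theta_i \, x_i$ for every $i$. Let $R \leqslant X^m$ be the image of the projection $P \rightarrow X^m$ that forgets the first coordinate. By the definition of image factorization together with Remark~\ref{rem-reg-eot}, a morphism $(y_1, \ldots, y_m): A \rightarrow X^m$ satisfies $(y_1, \ldots, y_m) \eot{A} R$ if and only if there exist a regular epimorphism $\alpha': A' \rightarrow A$ and a morphism $y: A' \rightarrow X$ with $y \equiv y_i \alpha' \mod \theta_i$ for every $i$; equivalently, precisely when the associated system is approximately solvable.

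Next, for each $i \in \{1, 2, 3\}$ the $(m-1)$-subsystem obtained by omitting the $i$-th equation inherits approximate pairwise solvability, so by the inductive hypothesis there exist a regular epimorphism $\alpha_i: Q_i \rightarrow S$ and a morphism $b^{(i)}: Q_i \rightarrow X$ with $b^{(i)} \equiv a_j \alpha_i \mod \theta_j$ for all $j \neq i$. Applying the construction in the proof of Lemma~\ref{lem-regular-category} to the three $\alpha_i$'s (via a wide pullback over $S$) produces a single regular epimorphism $\alpha: Q \rightarrow S$ together with morphisms $b^{(1)}, b^{(2)}, b^{(3)}: Q \rightarrow X$ which, by Remark~\ref{rem-reg-eot}, still satisfy $b^{(i)} \equiv a_j \alpha \mod \theta_j$ for $j \neq i$.

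To conclude, for $i = 1, 2, 3$ define
\[
v^{(i)} = (a_1\alpha, \ldots, a_{i-1}\alpha, b^{(i)}, a_{i+1}\alpha, \ldots, a_m\alpha): Q \rightarrow X^m,
\]
which agrees with the target tuple $(a_1\alpha, \ldots, a_m\alpha)$ in every coordinate except the $i$-th. Each $v^{(i)}$ lifts through $P$ with $b^{(i)}$ itself playing the role of the common $\theta$-class witness, hence $v^{(i)} \eot{Q} R$. Viewing $R$ as a ternary relation under the grouping $(X, X, X^{m-2})$, the three tuples $v^{(1)}, v^{(2)}, v^{(3)}$ differ from the target in exactly the first, the second, and the grouped third coordinate respectively. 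Lemma~\ref{lem-majority-selecting} then yields $(a_1\alpha, \ldots, a_m\alpha) \eot{Q} R$, which by the translation above produces the desired approximate solution. The main obstacle is essentially bookkeeping: one must verify carefully that $R$ genuinely encodes approximate solvability, and that grouping the last $m-2$ coordinates into a single factor correctly lets Lemma~\ref{lem-majority-selecting} apply with the three witness tuples just produced.
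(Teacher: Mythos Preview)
Your proof is correct and follows essentially the same strategy as the paper: induction on $m$, use of the inductive hypothesis on the three subsystems obtained by deleting rows $1,2,3$, combination of the three approximate solutions via the wide pullback of Lemma~\ref{lem-regular-category}, and application of Lemma~\ref{lem-majority-selecting} to an auxiliary $m$-ary relation encoding simultaneous solvability. The only difference is the choice of that auxiliary relation: the paper takes $R$ to be the image of $(f_1,\dots,f_m):X\to X_1\times\cdots\times X_m$ in the product of codomains, whereas you take $R\leqslant X^m$ to be the image of the wide pullback $P\to X^m$; your $R$ is precisely the preimage of the paper's $R$ under the componentwise map $X^m\to\prod X_i$, and both characterise approximate solvability in the same way.
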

\noindent
Consider the system of congruences from Definition~\ref{def-PCRT}:
\begin{align*}
x \equiv& a_1 \mod \theta_1, \\
x \equiv& a_2 \mod \theta_2, \\ 
& \vdots  \tag{$*$}\\
x \equiv& a_m \mod \theta_m.
\end{align*}
In the proof below, we will show that in any regular majority category $\C$, if any system of congruences of length $m$ is approximately solvable as soon as it is pairwise approximately solvable, then any system of length $m+1$ is approximately solvable as soon as it is pairwise approximately solvable. The result will then follow by induction, since in any regular category $\C$, any system of length $2$ is approximately solvable if and only if it is pairwise approximately solvable. 
\begin{proof} 
	Suppose that $m > 2$ is any natural number, and suppose that any system of congruences in $\C$ of length $m$ is approximately solvable as soon as it is pairwise approximately solvable. Let $X$ be any object in $\C$,  $a_1,a_2,...,a_{m+1}: S \rightarrow X$ any morphisms, and $\theta_1,\theta_2, ..., \theta_{m+1}$ any effective equivalence relations of the morphisms $f_1:X \rightarrow X_1, f_2:X \rightarrow X_2 ,..., f_{m+1}:X \rightarrow X_{m+1}$ respectively. Suppose that the system 
	\begin{align*}
	x \equiv& a_1 \mod \theta_1, \\
	x \equiv& a_2 \mod \theta_2,  \tag{$*$}\\
	&\vdots \\
	x \equiv& a_{m+1} \mod \theta_{m+1},
	\end{align*}
	is pairwise approximately solvable. By assumption, the three systems obtained from removing the first, second and third rows from ($*$) are approximately pairwise solvable and therefore they are approximately solvable. Let $\alpha_1: Q_1 \rightarrow S$ together with $x'_1:Q_1 \rightarrow X$, $\alpha_2:Q_2 \rightarrow S$ together with  $x'_2:Q_2 \rightarrow X$ and $\alpha_3:Q_3 \rightarrow S$ together with $x'_3:Q_3 \rightarrow X$ be the approximate solutions of $(*)$ after removing the first, second and third rows respectively. Consider the limit of the diagram:
	\[
	Q_i \xrightarrow{\alpha_i} S \tag{$i = 1,2,3$}
	\]
	which gives an object $Q$ together with three regular epimorphisms $p_1,p_2,p_3$ making the diagram 
	\[
	\xymatrix{
		Q \ar[d]_{p_i}\ar[dr]^{\alpha}& \\
		Q_i \ar[r]_{\alpha_i} & S
	}
	\]
	commute, where $\alpha$ is any composite $\alpha_i p_i$. Define $x_i = x'_i p_i$ for $i = 1,2,3$, then we have that $\alpha$ together with $x_1, \alpha$ together with $x_2$, and $\alpha$ together with $x_3$, are approximate solutions of the $(*)$ after removing the first, second and third row respectively. 
	Now, let $e:X \rightarrow R_0$ and $r: R_0 \rightarrow X_1 \times X_2 \times \cdots \times X_{m+1}$ be the regular epi and mono part of the regular image factorization of $(f_1,f_2,...,f_{m+1}): X \rightarrow X_1\times X_2 \times \cdots \times X_{m+1}$, and let $R$ be the $(m+1)$-ary relation  represented by $r$. Then we have 
	\begin{align*}
	(f_1 x_1 ,f_2 a_2\alpha,f_3 a_3\alpha, ...,f_{m+1} a_{m+1} \alpha) \eot{Q}& R \quad \text{and}, \\
	(f_1  a_1\alpha, f_2 x_2, f_3 a_3\alpha,...,f_{m+1} a_{m+1} \alpha ) \eot{Q}& R \quad \text{and}, \\
	(f_1 a_1 \alpha, f_2 a_2\alpha, f_3 x_3,...,f_{m+1} a_{m+1}\alpha ) \eot{Q}& R,
	\end{align*}
	which by Lemma~\ref{lem-majority-selecting}, implies that $(f_1a_1\alpha,f_2a_2\alpha,...,f_{m+1} a_{m+1}\alpha) \eot{Q} R$. Therefore, there exists $g:Q \rightarrow R_0$ making the square
	\[
	\xymatrix{
		Q \ar[d]_g \ar@{->>}[r]^\alpha \ar[r]& S \ar[d]^{(f_1a_1,f_2a_2,...,f_{m+1} a_{m+1})} \ar@{..>}[dl]_h \\
		R_0  \ar[r]_-r& X_1 \times X_2 \times \cdots \times X_{m+1}
	}
	\]
	commute. The morphism $h$ exists because $\alpha$ is a regular epimorphism. Finally, by pulling back $h$ along $e$, we get the commutative diagram:
	\[
	\xymatrix{
		Q' \ar[d]_{a'} \ar@{->>}[r]^{\alpha'} & S \ar[d]^h \ar[rd]^-{ \quad \quad (f_1a_1,f_2a_2,...,f_{m+1} a_{m+1})}&  \\
		X \ar@{->>}[r]_e & R_0 \ar[r]_-r & X_1 \times X_2 \times \cdots \times X_{m+1}
	}
	\] 
	where $a'$ is an approximate solution of the system ($*$) with approximation $\alpha'$.
\end{proof}
\noindent 
This brings us to the main theorem of this paper, which combines all of the previous results.
\begin{theorem} \label{thm-main-characterization}
	The following are equivalent for a regular category $\C$:
	\begin{enumerate}[(i)]
		\item $\C$ is a majority category.
		\item For any three reflexive relations $R,S,T$ on any object $X$ in $\C$ we have 
		\[
		R \circ (S \cap T) \geqslant (R \circ S) \cap (R \circ T)
		\]
				\item For any three reflexive relations $R,S,T$ on any object $X$ in $\C$ we have 
		\[
		R \cap (S \circ T) \leqslant (R \cap S) \circ (R \cap T)
		\]
		
		\item The Pairwise Chinese Remainder Theorem holds for $\C$.
		\item $\C$ has finite $2$-fold subobject decompositions.

	\end{enumerate}
\end{theorem}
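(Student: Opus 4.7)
The plan is to complete the cycle of implications by combining the already-established equivalence $(i)\Leftrightarrow(ii)\Leftrightarrow(iii)$ from Theorem~\ref{thm-pixley-generalization} with three further arrows that pick up $(iv)$ and $(v)$. Concretely, I will establish the chain $(i)\Rightarrow(iv)\Rightarrow(v)\Rightarrow(i)$; together with Theorem~\ref{thm-pixley-generalization}, this closes everything.

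For $(i)\Rightarrow(iv)$ I invoke Lemma~\ref{lem-majorityImpliesPCRT}: in a regular majority category the Pairwise Chinese Remainder Theorem follows by induction on the length $m$ of the system, the base case $m=2$ being trivial (pairwise approximate solvability \emph{is} approximate solvability) and the inductive step reducing a system of length $m+1$ to three systems of length $m$ obtained by dropping the first, second, or third row, then gluing their approximate solutions via Lemma~\ref{lem-majority-selecting} applied to the $(m+1)$-ary image relation of $(f_1,\ldots,f_{m+1})$. For $(iv)\Rightarrow(v)$ I appeal to Lemma~\ref{lem-PCRT-implies-subobject-decompositions}: given two subobjects $A,B$ of $C_1\times\cdots\times C_r$ with identical two-fold images, the isomorphisms between their two-fold images produce, via pullback and Lemma~\ref{lem-regular-category}, a pairwise approximately solvable congruence system on $B_0$ whose approximate solution factors a representative of $A$ through $B$; symmetry yields $A=B$.

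For $(v)\Rightarrow(i)$ I cite Theorem~\ref{thm-relationDecompositionImpiesMajority}: given any ternary relation $R\leqslant A\times B\times C$, one forms the subobject $P\leqslant A\times B\times C$ defined by pulling back $r_{1,2}\times r_{1,3}\times r_{2,3}$ along the canonical map into $(A\times B)\times(A\times C)\times(B\times C)$; this $P$ is manifestly majority selecting, has the same two-fold images as $R$, and by finite 2-fold subobject decompositions must equal $R$, forcing $R$ to be majority selecting as well.

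No fundamentally new work is required in this theorem: the substance of each implication has been packaged into one of the preceding lemmas, and the proof is a synthesis. If anything plays the role of the ``main obstacle'' in this synthesis, it is making sure the inductive step in Lemma~\ref{lem-majorityImpliesPCRT} is phrased so that all three approximate solutions share a common approximation, which is precisely what the limit construction (yielding the common $\alpha$ from $\alpha_1,\alpha_2,\alpha_3$) accomplishes. With that point handled inside Lemma~\ref{lem-majorityImpliesPCRT}, the present theorem reduces to citing the four results and closing the cycle.
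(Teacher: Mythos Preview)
Your proposal is correct and follows essentially the same approach as the paper: the paper's proof simply cites Lemma~\ref{lem-majorityImpliesPCRT}, Lemma~\ref{lem-PCRT-implies-subobject-decompositions}, and Theorem~\ref{thm-relationDecompositionImpiesMajority} to close the cycle $(i)\Rightarrow(iv)\Rightarrow(v)\Rightarrow(i)$, with the equivalence of $(i)$, $(ii)$, $(iii)$ already handled by Theorem~\ref{thm-pixley-generalization}. Your write-up is in fact more detailed than the paper's, which is a one-line invocation of those three results.
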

\begin{proof}
This result follows from application of Lemma~\ref{lem-PCRT-implies-subobject-decompositions}, Theorem~\ref{thm-relationDecompositionImpiesMajority}, and Lemma~\ref{lem-majorityImpliesPCRT}. 
\end{proof}

\section{Concluding remarks}
Not all facts about varieties admitting a majority term generalize to regular (or even exact majority categories). We give two representative illustrations of where this can occur. \textit{Illustration 1:} it is well known that finitary varieties admitting a majority term are necessarily congruence distributive \cite{Pix63}, however, is not true in general that exact majority categories are congruence distributive. For a counterexample we refer the reader to Example~12.1 in \cite{GJan16}, where the author of that paper shows that the variety of distributive lattices equipped with an operation of countable arity is not even congruence modular, although it is a majority category. 
\textit{Illustration 2:} A $4$-ary near unanimity term $p(x_1,x_2,x_3,x_4)$ is a $4$-ary term satisfying, 
\begin{align*}
p(x,x,x,y) =& x, \\
p(x,x,y,x) =& x, \\
p(x,y,x,x) =& x, \\
p(y,x,x,x) =& x.
\end{align*}
 Clearly the above system of equations determines an \emph{elementary} matix $M$ of terms in the sense of \cite{ZJan04}:  
\[
M = \begin{pmatrix}
x & x & x & y & \vline & x \\
x' & x' & y' & x'& \vline & x' \\
x'' & y'' & x'' & x'' & \vline & x'' \\
y''' & x''' & x''' & x''' & \vline & x''' 
\end{pmatrix}.
\]
If we call categories  $4$-unanimous when they are strictly $M$-closed (see \cite{ZJan04}), then 4-unanimous varieties precisely those that admit a $4$-ary near unanimity term. Now, if a variety of algebras possesses such a term, then it is congruence distributive (see \cite{Mit78}). Consequently, if $\V$ is a Mal'tsev variety which admits a near unanimity term, then $\V$ admits a majority term (see \cite{Pix63}). Thus for varieties, we have the relationship 
\[
\text{Mal'tsev + 4-unanimous = Mal'tsev + majority},
\]
among these notions. This relationship extends to Barr-exact categories: if $\C$ is a Barr exact Mal'tsev 4-unanimous category, then $\C$ is majority category. However, this relationship does not extend to regular categories (see section 5 of \cite{HoePhd}). This shows that there can be relationships between matrix conditions \cite{ZJan04}, which depend on subtle exactness conditions such as every equivalence relation being effective. 
\section*{Acknowledgements}
Many thanks are due to Prof.~Z.~Janelidze, for many helpful and stimulating discussions on the topic of regular majority categories.

\bibliographystyle{plain}
\bibliography{Mike}

\end{document}